\documentclass[10pt]{amsart}
\usepackage{amsthm,amsmath,amssymb,amsbsy,amsfonts,enumerate,graphicx}
\usepackage[hmargin=1.5in,vmargin=1.5in]{geometry}


\newtheorem{prop}{Proposition}[section]
\newtheorem{coro}{Corollary}[section]
\newtheorem{lemma}{Lemma}[section]

\newcommand{\Q}{{\mathbb Q}}


\begin{document}

\title{Zeros and convergent subsequences of Stern polynomials}
\author{Antonio R. Vargas}
\address{Dept. of Mathematics and Statistics, Dalhousie University, Halifax, Nova Scotia B3H 1Z9, Canada}

\begin{abstract}
We investigate Dilcher and Stolarsky's polynomial analogue of the Stern diatomic sequence.  Basic information is obtained concerning the distribution of their zeros in the plane.  Also, uncountably many subsequences are found which each converge to a unique analytic function on the open unit disk.  We thus generalize a result of Dilcher and Stolarsky from their second paper on the subject.
\end{abstract}

\maketitle

\section{Introduction}

The Stern sequence $\{a(n)\}_{n \geq 0}$, also known as the Stern diatomic sequence (or Stern's diatomic series), is an integer sequence defined recursively by $a(0) = 0$, $a(1) = 1$, and
\begin{align*}
	&a(2n) = a(n), \\
	&a(2n+1) = a(n) + a(n+1).
\end{align*}
Dilcher and Stolarsky recently introduced a polynomial analogue of this sequence and explored some of its properties \cite{dilcher:sternpolys01,dilcher:sternpolys02} (see \cite{dilcher:sternpolys01} for background information and references for the classical Stern sequence).  This new sequence $\{a(n;z)\}_{n \geq 0}$ is defined by $a(0;z) = 0$, $a(1;z) = 1$, and
\begin{equation}
	\begin{split}
	&a(2n;z) = a(n;z^2), \\
	&a(2n+1;z) = z\,a(n;z^2) + a(n+1;z^2).
	\end{split}
	\label{spdef}
\end{equation}
These polynomials have only $0$ and $1$ as coefficients, and $a(n;1)=a(n)$.  See Table \ref{seqlist} for a list of the first few polynomials in the sequence.

As nothing has yet been said about the zeros of these polynomials, we discuss the basics of their distribution in the plane in Section \ref{clustcirc}.

The heart of this paper lies in Section \ref{insidecircle}, where we switch to a more structural point of view to show that subsequences of \eqref{spdef} of a certain form converge to analytic functions on the open unit disk.  In fact, there are uncountably many such limit functions.  This generalizes a result of Dilcher and Stolarsky in \cite{dilcher:sternpolys02} where two particular convergent subsequences of \eqref{spdef} were found.  The transcendence of the limit functions of these two subsequences has been of interest: Coons \cite{coons:transcendence} showed that the limit functions are transcendental over $\Q(z)$, the field of rational functions over $\Q$, and Adamczewski \cite{adam:contfracs} further showed that the functions take transcendental values at every algebraic number in the punctured unit disk.  Also, Fibonacci-like quadratic relations were derived in \cite{dilcher:sternpolys02} for the elements of these two subsequences---an effort continued by Vsemirnov in \cite{vsemirnov:sternpolys}.  We will find these particular subsequences and functions among those introduced in this paper in Section \ref{dilstoseqs}.

We take a moment here to note that a different polynomial analogue to the Stern sequence was given by Klav{\v{z}}ar et. al. in \cite{petr:altsternpolys}.  A similar sequence to this was defined by Alkauskas \cite{alkauskas:likealtsternpolys} in relation to p-continued fractions.

\section{Zeros}
\label{clustcirc}

In this section we show that the zeros of the Stern polynomials cluster uniformly near the unit circle in the sense of Weyl.  That is, we can prove the following result.

\begin{prop}
Let $\sharp_n^\circ(\rho)$ be the number of zeros of $a(n;z)$ in the annulus \linebreak $1-\rho \leq |z| \leq 1/(1-\rho)$, and let $\sharp_n^\angle(\theta_1,\theta_2)$ be the number of zeros of $a(n;z)$ in the sector $\theta_1 \leq \arg z < \theta_2$.  For fixed $\rho$, $\theta_1$, and $\theta_2$ satisfying $0 < \rho < 1$ and $0 \leq \theta_1 < \theta_2 \leq 2\pi$,
\[
	-16 \sqrt{\frac{3 \log n - \log 8}{n}} < \frac{\sharp_n^\angle(\theta_1,\theta_2)}{\deg a(n;z)} - \frac{\theta_2-\theta_1}{2\pi} < 16 \sqrt{\frac{3 \log n - \log 8}{n}}
\]
and
\[
	0 \leq 1 - \frac{\sharp_n^\circ(\rho)}{\deg a(n;z)} < \frac{2}{\rho} \cdot \frac{3\log n - \log 8}{n}
\]
for all $n > 0$ which are not powers of 2.
\label{unitcircprop}
\end{prop}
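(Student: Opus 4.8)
The plan is to derive both inequalities from classical zero-distribution estimates of Erd\H{o}s--Tur\'an type, after extracting from the recursion \eqref{spdef} the two arithmetic facts that drive everything: the size of the coefficients and a linear lower bound on the degree. Writing $a(n;z)=\sum_{k=0}^{D}c_k z^k$ with $D=\deg a(n;z)$, I would invoke the Erd\H{o}s--Tur\'an angular discrepancy bound
\[
	\left|\frac{\sharp_n^\angle(\theta_1,\theta_2)}{D}-\frac{\theta_2-\theta_1}{2\pi}\right| \le 16\sqrt{\frac1D\log\frac{\sum_k|c_k|}{\sqrt{|c_0 c_D|}}}
\]
together with the companion radial clustering estimate, which counts the zeros in $|z|\le 1-\rho$ via a Jensen/Mahler-measure inequality and those in $|z|\ge(1-\rho)^{-1}$ by applying the same bound to the reciprocal polynomial, and after using $-\log(1-\rho)\ge\rho$ yields
\[
	1-\frac{\sharp_n^\circ(\rho)}{D} \le \frac{2}{\rho}\cdot\frac1D\log\frac{\sum_k|c_k|}{\sqrt{|c_0 c_D|}}.
\]
Both then reduce to estimating the single quantity $\frac1D\log\frac{\sum_k|c_k|}{\sqrt{|c_0 c_D|}}$, and the inequality $0\le 1-\sharp_n^\circ(\rho)/D$ is trivial since the annulus cannot contain more than all of the zeros.

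Next I would pin down the coefficients. Because every coefficient is $0$ or $1$, $\sum_k|c_k|=a(n;1)=a(n)$, the Stern number. The constant term is $c_0=a(n;0)=1$, which follows by the immediate induction from \eqref{spdef} that $a(2n;0)=a(n;0)$ and $a(2n+1;0)=a(n+1;0)$; and the leading coefficient is $c_D=1$, since in the odd-index recursion the two candidate degrees $1+2\deg a(n;z)$ and $2\deg a(n+1;z)$ have opposite parity, so they never tie and no cancellation can occur. Hence $\sqrt{|c_0 c_D|}=1$ and the key quantity is exactly $\frac{\log a(n)}{D}$.

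I would then prove two estimates. First, $a(n)\le n/2$ for non-powers of $2$ with $n\ge 6$, by induction from $a(2n)=a(n)$ and $a(2n+1)=a(n)+a(n+1)$ after checking the base cases $6\le n\le 11$. Second, and crucially, $\deg a(n;z)\ge n/3$ for every $n$ that is not a power of $2$. This I would establish by strong induction on $\deg a(2n;z)=2\deg a(n;z)$ and $\deg a(2n+1;z)=\max\!\big(2\deg a(n;z)+1,\ 2\deg a(n+1;z)\big)$, using $\max(2x,2y)\ge x+y$ when both $n$ and $n+1$ are non-powers of $2$, and handling separately the two cases where one of $n,n+1$ is a power of $2$ (where the surviving term alone already suffices). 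Combining the two estimates, for non-powers of $2$ with $n\ge 6$,
\[
	\frac{\log a(n)}{D} \le \frac{\log(n/2)}{n/3} = \frac{3\log n-\log 8}{n},
\]
and substituting into the two displayed bounds gives the Proposition for such $n$; the remaining cases $n=3$ and $n=5$ are checked by hand, where the stated right-hand sides already exceed the trivial a priori bounds on the two discrepancies.

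The main obstacle is the linear lower bound $\deg a(n;z)\ge n/3$. The Erd\H{o}s--Tur\'an inequalities force clustering only when $\frac1D\log a(n)\to 0$, which requires $D$ to grow linearly in $n$, whereas $a(n)$ itself grows like a power of $n$ (coming from the Fibonacci-type growth of the Stern sequence). Showing that the degree can never be small relative to the index---with the bound being sharp along the family $n=3\cdot 2^{j}$, where $\deg a(n;z)=n/3$ exactly---is the structural heart of the argument. Once it is in hand the discrepancy estimates are routine substitutions, with the constants $16$ and $2$ tracking the Erd\H{o}s--Tur\'an constant and the inside/outside split, respectively, and the term $-\log 8$ arising precisely from the factor $\tfrac13$ in the degree bound.
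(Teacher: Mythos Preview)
Your approach is correct and follows the same overall scheme as the paper: invoke the Erd\H{o}s--Tur\'an inequality (Proposition~\ref{erdprop}) and the Hughes--Nikeghbali radial estimate (Proposition~\ref{hughesprop}), then show that $L(a(n;z))/\deg a(n;z) < (3\log n - \log 8)/n$. The difference lies only in how the two ingredients $\deg a(n;z)\ge n/3$ and $a(n)<n/2$ are obtained. The paper bypasses both of your inductions by quoting the exact degree formula $\deg a(n;z)=(n-2^{e(n)})/2$ from Proposition~\ref{dilstothm}: writing $d(n)=n/2^{e(n)}$ for the odd part, one has $\deg a(n;z)=n\,(d(n)-1)/(2d(n))\in[n/3,\,n/2)$ as soon as $d(n)\ge 3$, and then $L=\log a(n)\le \log(\deg a(n;z)+1)$ because the coefficients are $0$ or $1$. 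Your inductive route is more elementary but longer; on the other hand, you are right to isolate $n=3$ and $n=5$, where $a(n)>n/2$ and the paper's displayed chain $L(a(n;z))\le \log\deg a(n;z)<\log n-\log 2$ fails as written, although the Proposition's conclusions remain true there for the trivial reasons you indicate (the single zero of $a(3;z)$ and both zeros of $a(5;z)$ lie on $|z|=1$).
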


To prove this we require three results, the first due to Dilcher and Stolarsky \cite{dilcher:sternpolys01}, the second due to Erd\H{o}s and Tur{\'a}n \cite{erdos:zeroargdistr}, and the third due to Hughes and Nikeghbali \cite{hughes:zerosrandpolys}.

\begin{prop}[Dilcher and Stolarsky]
	Let $e(n) = \textnormal{ord}_2 (n)$, the highest power of $2$ dividing $n$.  Then for $n \geq 1$,
	\[
		\deg a(n;z) = \frac{n - 2^{e(n)}}{2}.
	\]
	\label{dilstothm}
\end{prop}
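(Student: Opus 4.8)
The plan is to induct strongly on $n$, writing $d(n) = \deg a(n;z)$ and carrying along the auxiliary fact that $a(n;z)$ is a nonzero polynomial with nonnegative coefficients for every $n \geq 1$, so that its degree is well defined. The base case $n = 1$ is immediate: $a(1;z) = 1$, $e(1) = 0$, and $(1 - 2^0)/2 = 0 = \deg 1$. For the inductive step I would split according to the parity of $n$, since this is exactly how the defining recurrence \eqref{spdef} branches.

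If $n = 2m$ is even, then $a(n;z) = a(m;z^2)$ forces $d(n) = 2\,d(m)$, while $e(n) = e(m) + 1$ gives $2^{e(n)} = 2 \cdot 2^{e(m)}$. Feeding the inductive hypothesis $d(m) = (m - 2^{e(m)})/2$ into these relations yields $d(n) = m - 2^{e(m)} = (2m - 2 \cdot 2^{e(m)})/2 = (n - 2^{e(n)})/2$, as desired. This case is purely bookkeeping.

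The odd case $n = 2m+1$ (with $m \geq 1$, since $m = 0$ is the base case) is where the real argument lies. Here $a(n;z) = z\,a(m;z^2) + a(m+1;z^2)$, and since $n$ is odd we must show $d(n) = (n-1)/2 = m$. The crucial observation is that $a(m;z^2)$ and $a(m+1;z^2)$ are polynomials in $z^2$, hence contain only even powers of $z$; multiplying the first by $z$ makes $z\,a(m;z^2)$ a polynomial in odd powers of $z$ alone. Consequently the two summands share no monomials, their leading terms can never cancel, and the degree of the sum is simply the maximum of the two individual degrees. By the inductive hypothesis these degrees are $1 + 2\,d(m) = 1 + m - 2^{e(m)}$ and $2\,d(m+1) = m + 1 - 2^{e(m+1)}$.

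It then remains to compute $\max\{1 + m - 2^{e(m)},\, m + 1 - 2^{e(m+1)}\}$ and check that it equals $m$. Exactly one of $m$ and $m+1$ is odd: if $m$ is odd then $2^{e(m)} = 1$ makes the first quantity equal $m$, while $m+1$ is even forces $2^{e(m+1)} \geq 2$ and pins the second quantity at $\leq m-1$; if $m$ is even (and positive) the roles reverse. Either way the maximum is $m$, completing the step. I expect the only delicate point to be the no-cancellation observation; once the disjoint parity of the exponents in the two summands is noted, the rest reduces to this short case check on the parity of $m$.
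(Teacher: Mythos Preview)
Your argument is correct: the strong induction with the even/odd split works, and the key point in the odd case---that $z\,a(m;z^2)$ and $a(m+1;z^2)$ live in disjoint monomial supports (odd versus even exponents), so the degree of their sum is the maximum of their degrees---is exactly the right observation. The parity case-check on $m$ that finishes the computation is clean and complete.

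As for comparison with the paper: there is nothing to compare. The paper does not prove this proposition at all; it is quoted as a result of Dilcher and Stolarsky from \cite{dilcher:sternpolys01} and used as a black box. Your write-up therefore supplies a self-contained proof where the paper offers none, and is a perfectly reasonable one (indeed, essentially the natural one given the recursive definition \eqref{spdef}).
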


\begin{prop}[Erd\H{o}s and Tur{\'a}n]
For a polynomial
\[
	P(z) = \sum_{j=0}^{N} \alpha_j z^j
\]
with $\alpha_0 \alpha_N \neq 0$, define
\[
	L(P) = \log \sum_{j=0}^{N} |\alpha_j| - \frac{1}{2} \log |\alpha_0| - \frac{1}{2} \log |\alpha_N|.
\]
If $\sharp_P^\angle(\theta_1,\theta_2)$ is the number of zeros of $P(z)$ in the sector $\theta_1 \leq \arg z < \theta_2$, then, for \linebreak $0 \leq \theta_1 < \theta_2 \leq 2\pi$,
\[
	\left(\frac{\theta_2-\theta_1}{2\pi} - \frac{\sharp_P^\angle(\theta_1,\theta_2)}{N}\right)^2 < 256 \,\frac{L(P)}{N}.
\]
\label{erdprop}
\end{prop}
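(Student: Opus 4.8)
The plan is to prove this classical equidistribution estimate by (i) reducing the sector count to the discrepancy of the argument distribution of the roots, (ii) controlling that discrepancy by exponential (Weyl) sums, and (iii) bounding those sums through $L(P)$. Write $P(z) = \alpha_N\prod_{j=1}^N(z-z_j)$ with $z_j = r_j e^{i\phi_j}$, and let $\hat\mu(\nu) = \frac1N\sum_{j=1}^N e^{i\nu\phi_j}$ be the Weyl sums of the arguments. The first ingredient is the Erd\H{o}s--Tur\'an discrepancy inequality for points on the circle: for every positive integer $m$,
\[
	\left|\frac{\sharp_P^\angle(\theta_1,\theta_2)}{N} - \frac{\theta_2-\theta_1}{2\pi}\right| \le \frac{c_0}{m+1} + c_1\sum_{\nu=1}^m \frac1\nu\,|\hat\mu(\nu)|,
\]
which I would prove by sandwiching the indicator of the arc $[\theta_1,\theta_2)$ between trigonometric polynomials of degree $m$ (a Fej\'er/Selberg-type majorant and minorant) and integrating against $\mu = \frac1N\sum_j\delta_{\phi_j}$. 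The explicit constant $256$ and the square in the statement will emerge from optimizing $m$ at the end.

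The second ingredient extracts information about the roots from $L(P)$ via Jensen's formula. Setting $g(\theta) = \log\sum_j|\alpha_j| - \log|P(e^{i\theta})|$, the triangle inequality gives $g \ge 0$, while Jensen's formula evaluates $\frac1{2\pi}\int_0^{2\pi}\log|P(e^{i\theta})|\,d\theta = \log|\alpha_N| + \sum_{r_j>1}\log r_j$. Combined with $|\alpha_0| = |\alpha_N|\prod_j r_j$, these yield two facts at once: $\frac1{2\pi}\int_0^{2\pi} g(\theta)\,d\theta \le L(P)$, so every nonzero Fourier coefficient $c_\nu$ of $\log|P(e^{i\theta})|$ obeys $|c_\nu| \le L(P)$; and the total radial deviation of the roots is controlled, $\sum_{j=1}^N|\log r_j| \le 2L(P)$.

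The heart of the argument is bounding $\hat\mu(\nu)$. Expanding $\log|e^{i\theta}-z_j|$ in a Fourier series and summing over $j$ identifies $-2\nu c_\nu = \sum_{j=1}^N \rho_j^\nu e^{i\nu\phi_j}$, where $\rho_j = \min(r_j, 1/r_j) \le 1$; this is the argument sum \emph{damped} by the roots' distance from the circle. The damped sum is bounded by $2\nu L(P)$ from $|c_\nu| \le L(P)$, and the damping error is controlled by the radial-deviation bound, since $|1-\rho_j^\nu| \le \nu\log(1/\rho_j) = \nu|\log r_j|$ gives $\bigl|\,N\hat\mu(\nu) - \sum_j\rho_j^\nu e^{i\nu\phi_j}\,\bigr| \le \nu\sum_j|\log r_j| \le 2\nu L(P)$. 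Hence $|\hat\mu(\nu)| \le 4\nu L(P)/N$, so that $\frac1\nu|\hat\mu(\nu)| \le 4L(P)/N$ uniformly in $\nu$; feeding this into the discrepancy inequality yields a bound of the shape $c_0/(m+1) + 4c_1 m L(P)/N$, and choosing $m$ near $\sqrt{N/L(P)}$ produces a discrepancy proportional to $\sqrt{L(P)/N}$, which squares to $256\,L(P)/N$ once the constants are tracked.

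The main obstacle is exactly the decoupling of the genuine argument distribution from its Poisson-smoothed (damped) version. A cluster of roots far inside the disk---say the roots of $(z-a)^N$ for small $a$---contributes fully to $\sharp_P^\angle$ yet is almost invisible to the Fourier coefficients of $\log|P|$, so $|c_\nu|$ alone cannot control $\hat\mu(\nu)$. The resolution is precisely the radial-deviation estimate $\sum_j|\log r_j|\le 2L(P)$, which shows that any such misbehavior forces $L(P)$ to be large and hence makes the target inequality correspondingly weak. Making this trade-off quantitative, together with securing sharp enough constants in the circle discrepancy inequality, is where the real work lies.
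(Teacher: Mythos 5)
The paper offers no proof of this proposition at all: it is imported as a known tool from Erd\H{o}s and Tur\'an's paper \cite{erdos:zeroargdistr}, so there is no internal argument to compare against, and your sketch must be measured against the classical proofs. What you outline is, in substance, the standard modern Fourier-analytic proof, and it is correct as a roadmap: Jensen's formula applied at $0$ and at $\infty$ gives $\frac{1}{2\pi}\int_0^{2\pi}\log|P(e^{i\theta})|\,d\theta = \frac12\log|\alpha_0\alpha_N| + \frac12\sum_j|\log r_j|$, from which both of your facts ($|c_\nu|\le L(P)$ for $\nu\neq 0$ and $\sum_j|\log r_j|\le 2L(P)$) follow exactly as you say; the damped Weyl sums then control $\hat\mu(\nu)$, and the Erd\H{o}s--Tur\'an discrepancy inequality for points on the circle finishes. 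This is genuinely different from Erd\H{o}s and Tur\'an's original 1950 argument, which avoids Fourier analysis: they first normalize the zeros onto the unit circle (using elementary inequalities comparing $|e^{i\theta}-r_je^{i\phi_j}|$ with $|e^{i\theta}-e^{i\phi_j}|$, at a cost controlled by $L(P)$) and then bound the number of zeros on an arc by a direct extremal, Chebyshev-type estimate on products over arcs. Your route buys modularity and much better constants (the literature following this path gets constants far below $256$), and it isolates the two mechanisms (angular discrepancy versus radial deviation) cleanly; the original is more elementary and self-contained.

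Two points deserve explicit care in a write-up. First, your identity $-2\nu c_\nu = \sum_j \rho_j^\nu e^{i\nu\phi_j}$ is true but hides the one spot where conjugation could go wrong: for a root with $r_j>1$ one needs $|e^{i\theta}-r_je^{i\phi_j}| = r_j\,|e^{i\theta}-r_j^{-1}e^{i\phi_j}|$, so the reflected root has the \emph{same} argument $\phi_j$ and inside and outside roots both expand through $\cos\bigl(\nu(\theta-\phi_j)\bigr)$ with a common phase; without this observation outside roots would appear to contribute $e^{-i\nu\phi_j}$ and the decoupling you rely on would fail. Second, the constant chase does close, but you should verify it: with the classical discrepancy inequality in the form $|D|\le \frac{1}{m+1}+3\sum_{\nu=1}^m \frac{1}{\nu}|\hat\mu(\nu)|$ and your uniform bound $\frac{1}{\nu}|\hat\mu(\nu)|\le 4L(P)/N$, choosing $m$ near $\sqrt{N/(12\,L(P))}$ yields roughly $D^2 \le 48\,L(P)/N$ after integer rounding, and the excluded regime where no admissible $m$ exists is trivial because $|D|\le 1$ while $L(P)\ge\log 2$ (since $\sum_j|\alpha_j|\ge 2\sqrt{|\alpha_0\alpha_N|}$), so the inequality with constant $256$ holds with substantial slack. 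Modulo supplying the Fej\'er/Selberg majorant proof of the circle discrepancy inequality with explicit constants, your plan is a complete and correct alternative to the cited original.
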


\begin{prop}[Hughes and Nikeghbali]
For a polynomial
\[
	P(z) = \sum_{j=0}^{N} \alpha_j z^j
\]
with $\alpha_0 \alpha_N \neq 0$, let $L(P)$ be as defined in Proposition \ref{erdprop}.  If $\sharp_P^\circ(\rho)$ is the number of zeros of $P(z)$ in the annulus $1-\rho \leq |z| \leq 1/(1-\rho)$, then, for $0 < \rho < 1$,
\[
	1 - \frac{\sharp_P^\circ(\rho)}{N} \leq \frac{2}{\rho} \cdot \frac{L(P)}{N}.
\]
\label{hughesprop}
\end{prop}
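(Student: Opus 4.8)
The plan is to pass from the coefficient quantity $L(P)$ to a statement about the moduli of the zeros of $P$ and then count. Write $P(z) = \alpha_N\prod_{k=1}^{N}(z-z_k)$ with the zeros listed according to multiplicity; since $\alpha_0 = P(0) \neq 0$, no zero sits at the origin, and comparing constant terms gives the identity $|\alpha_0| = |\alpha_N|\prod_{k=1}^{N}|z_k|$. Abbreviating the coefficient norms by $\|P\|_1 = \sum_{j=0}^{N}|\alpha_j|$ and $\|P\|_2 = \bigl(\sum_{j=0}^{N}|\alpha_j|^2\bigr)^{1/2}$, the definition reads $L(P) = \log\|P\|_1 - \tfrac12\log|\alpha_0| - \tfrac12\log|\alpha_N|$. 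Everything will hinge on converting this into a bound on $\sum_{k}\bigl|\log|z_k|\bigr|$, and a zero strictly outside the annulus is exactly one whose $\bigl|\log|z_k|\bigr|$ is bounded away from $0$.

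The heart of the argument is the inequality $2L(P) \geq \sum_{k=1}^{N}\bigl|\log|z_k|\bigr|$. To obtain it I would invoke the Mahler measure $M(P) = |\alpha_N|\prod_{k=1}^{N}\max(1,|z_k|)$, which by Jensen's formula also equals $\exp\bigl(\tfrac{1}{2\pi}\int_0^{2\pi}\log|P(e^{i\theta})|\,d\theta\bigr)$, together with Landau's inequality $M(P) \leq \|P\|_2 \leq \|P\|_1$. The bound $M(P) \leq \|P\|_2$ follows from the concavity of the logarithm (Jensen's inequality) and Parseval's identity $\tfrac{1}{2\pi}\int_0^{2\pi}|P(e^{i\theta})|^2\,d\theta = \sum_{j}|\alpha_j|^2$, while $\|P\|_2 \leq \|P\|_1$ is the elementary comparison of norms. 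Taking logarithms and using the product formula for $M(P)$ and the identity for $|\alpha_0|$, the leading-coefficient terms cancel and one computes
\begin{align*}
	2L(P) &= 2\log\|P\|_1 - \log|\alpha_0| - \log|\alpha_N| \\
	&\geq 2\log M(P) - \log|\alpha_0| - \log|\alpha_N| = \sum_{k=1}^{N}\bigl(2\log^{+}|z_k| - \log|z_k|\bigr),
\end{align*}
where $\log^{+}x = \max(\log x, 0)$. A case check ($|z_k| \geq 1$ versus $|z_k| < 1$) shows each summand equals $\bigl|\log|z_k|\bigr|$, giving the displayed bound; in particular $L(P) \geq 0$.

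Next I would estimate the contribution of the zeros that fail to lie in the annulus. If $|z_k| < 1-\rho$ or $|z_k| > 1/(1-\rho)$, then $\bigl|\log|z_k|\bigr| > -\log(1-\rho)$, and the elementary inequality $-\log(1-\rho) \geq \rho$ (valid for $0<\rho<1$, since $\log(1-\rho) \leq -\rho$) shows each such zero contributes more than $\rho$ to the sum. Writing $m = N - \sharp_P^\circ(\rho)$ for the number of zeros outside the annulus and discarding the nonnegative contributions of the remaining zeros,
\[
	2L(P) \geq \sum_{k=1}^{N}\bigl|\log|z_k|\bigr| \geq \sum_{z_k\ \text{outside}}\bigl|\log|z_k|\bigr| \geq m\rho,
\]
the final inequality being strict when $m>0$ and a trivial equality when $m=0$. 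Rearranging gives $m \leq 2L(P)/\rho$, and dividing by $N$ yields exactly $1 - \sharp_P^\circ(\rho)/N \leq (2/\rho)(L(P)/N)$.

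The only genuinely substantive step is the passage $2L(P) \geq \sum_{k}\bigl|\log|z_k|\bigr|$, where Landau's inequality $M(P) \leq \|P\|_1$ does all the work; everything after it is bookkeeping plus the one-line estimate $-\log(1-\rho) \geq \rho$. I would therefore expect the main care to lie in stating the Mahler-measure facts cleanly and in handling the edge cases—zeros of modulus exactly $1-\rho$ or $1/(1-\rho)$, or no zeros at all outside the annulus—so that the non-strict inequality in the conclusion is fully justified.
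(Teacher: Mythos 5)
The paper states this proposition without proof---it is imported verbatim from Hughes and Nikeghbali \cite{hughes:zerosrandpolys} as one of the three ingredients for Proposition \ref{unitcircprop}---so there is no in-paper argument to compare yours against. That said, your proof is correct and complete, and it is essentially the argument from the cited source: the chain $M(P) \leq \|P\|_2 \leq \|P\|_1$ (Landau's inequality via Jensen and Parseval), the constant-term identity $|\alpha_0| = |\alpha_N|\prod_k |z_k|$, and the resulting bound $2L(P) \geq \sum_k \bigl(2\log^{+}|z_k| - \log|z_k|\bigr) = \sum_k \bigl|\log|z_k|\bigr|$ all check out, as does the cancellation of the $\log|\alpha_N|$ terms. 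Your handling of the edge cases is also sound: since the annulus $1-\rho \leq |z| \leq 1/(1-\rho)$ is closed, a zero ``outside'' satisfies a strict inequality $\bigl|\log|z_k|\bigr| > -\log(1-\rho) \geq \rho$ in both the small-modulus and large-modulus cases, and the $m=0$ case is trivially covered because $L(P) \geq 0$ follows from your main inequality. In effect you have supplied the proof the paper delegates to the literature, which is a reasonable thing to include if you want the zero-counting section to be self-contained.
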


We may now prove Proposition \ref{unitcircprop}.

\begin{proof}[Proof of Proposition \ref{unitcircprop}]
In light of Propositions \ref{erdprop} and \ref{hughesprop}, the results will follow from estimating the quantity $L(a(n;z))/\deg a(n;z)$.

Let $e(n)$ be the largest power of $2$ dividing $n$, and let $d(n) = n/2^{e(n)}$ be the odd component of $n$.  In these terms, $d(n) = 1$ is necessary and sufficient for $a(n;z) = 1$, so we will only consider $n$ such that $d(n) \geq 3$.  With the aid of Proposition \ref{dilstothm} we calculate
\begin{align*}
	L({a(n;z)}) &\leq \log \deg a(n;z) \\
							&< \log n - \log 2
\end{align*}
and
\[
	\deg\, a(n;z) \geq \frac{n}{3}
\]
to get
\[
	\frac{L(a(n;z))}{\deg\, a(n;z)} < \frac{3 \log n - \log 8}{n}.
\]
This completes the proof.
\end{proof}

We conclude this section with a small remark about real zeros.

It follows from Proposition \ref{dilstothm} that the only Stern polynomials of odd degree are those of the form $a(4n+3;z)$.  Now, $a(4n+3;-1) = -a(n) < 0$ and $a(4n+3;0) = 1$, so $a(4n+3;x)$ (where $x$ is real) has a zero in the interval $(-1,0)$.  Further, using \eqref{spdef} it is not difficult to show that on $(-\infty,-1)$ we have $a(4n+3;x) < 0$, on $(0,\infty)$ we have $a(4n+3;x) > 0$, and on $(-1,0)$ we calculate
\[
	\frac{d}{dx} a(4n+3;x) = (3x^2+4x^6)\,a'(n;x^4) + (1+4x^3+4x^4)\,a'(n+1;x^4) > 0.
\]
Hence this real zero of $a(4n+3;z)$ is unique.

\section{Convergent subsequences}
\label{insidecircle}

Given the definition of this polynomial sequence it is natural to look at subsequences of the form $\{a(2^{m+1} n + \ell;z)\}_{n \geq 0}$ for fixed integers $m \geq 0$ and $0 \leq \ell < 2^{m+1}$.  To facilitate this we will enlist the help of binary sequences (that is, sequences whose elements are only $0$'s and $1$'s).  We will denote a finite sequence by $\{b_k\}_{0 \leq k \leq m}$ and an infinite sequence by $\{b_k\}$.

Suppose we have a binary sequence $\{b_k\}_{0 \leq k \leq m}$.  With this sequence we may associate the polynomial sequence
\[
	\left\{\!a\!\!\left(\!2^{m+1} n +\!\sum_{j = 0}^{m} 2^j b_j\,;\,z\!\right)\!\right\}_{n \geq 0}\!.
\]
For fixed $m$, every polynomial sequence of the form $\{a(2^{m+1} n + \ell;z)\}_{n \geq 0}$ is associated with exactly one finite binary sequence.

But we are more interested in what occurs when we instead leave $n$ fixed, begin with an infinite binary sequence $\{b_k\}$, and allow $m$ to vary, producing the sequences
\[
	\left\{a\!\!\left(\!2^{m+1}n +\!\sum_{j = 0}^{m} 2^j b_j\,;\,z\!\right)\right\}_{m \geq 0}.
\]
These sequences are unique in that as $m$ increases the heads of the polynomials stabilize, so that they gradually build a formal Maclaurin series.  The series built turns out to be independent of $n$, thus we identify it with the binary sequence $\{b_k\}$.

\begin{prop}
Given a binary sequence $\{b_k\}$, the sequences of polynomials
\[
	\left\{a\!\!\left(\!2^{m+1}n +\!\sum_{j = 0}^{m} 2^j b_j\,;\,z\!\right)\right\}_{m \geq 0} \text{ for all fixed $n > 0$}
\]
converge to the same analytic function $f_{\{b_k\}}$ on the open unit disk.
\label{analfuncprop}
\end{prop}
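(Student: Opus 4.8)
The plan is to prove the coefficientwise statement first---that for each fixed degree the corresponding coefficient of $a(2^{m+1}n + \ell_m;z)$, where $\ell_m = \sum_{j=0}^m 2^j b_j$, is eventually constant in $m$ and independent of $n$---and then to promote this to uniform convergence on compact subsets of the disk using the fact that every Stern polynomial has coefficients in $\{0,1\}$. Throughout I take $n \ge 1$.

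First I would isolate the arithmetic core in a stabilization lemma: for every $m \ge 0$, every $\ell$ with $0 \le \ell < 2^{m+1}$, and all integers $p,q \ge 1$,
\[
	a(2^{m+1}p + \ell; z) \equiv a(2^{m+1}q + \ell; z) \pmod{z^{2^{m+1}}},
\]
that is, the two polynomials share every coefficient of degree below $2^{m+1}$. The reason a single lemma suffices is the identity
\[
	2^{m+2}n + \sum_{j=0}^{m+1} 2^j b_j = 2^{m+1}\bigl(2n + b_{m+1}\bigr) + \ell_m,
\]
which shows that advancing from the $m$-th to the $(m+1)$-st member of our sequence merely replaces the high part $n$ by $2n + b_{m+1}$ while preserving the low part $\ell_m$. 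Taking $p = n$ and $q = 2n + b_{m+1}$ in the lemma then forces consecutive members to agree modulo $z^{2^{m+1}}$, so the coefficients stabilize; letting $p,q$ range over all positive integers shows that the stabilized head is the same for every $n$.

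The hard part will be the inductive step of the lemma, which I would carry out by induction on $m$ using \eqref{spdef} after writing $\ell = 2\ell' + \epsilon$ with $\epsilon \in \{0,1\}$ and $0 \le \ell' < 2^m$. When $\epsilon = 0$ we have $a(2^{m+1}p+\ell;z) = a(2^m p + \ell'; z^2)$, and the inductive hypothesis for $m-1$---agreement of $a(2^m p + \ell';w)$ and $a(2^m q + \ell';w)$ modulo $w^{2^m}$---becomes agreement modulo $(z^2)^{2^m} = z^{2^{m+1}}$ after setting $w = z^2$. When $\epsilon = 1$ the recursion gives $a(2^{m+1}p+\ell;z) = z\,a(2^m p + \ell';z^2) + a(2^m p + \ell'+1;z^2)$, and the subtle point is the second summand in the carry case $\ell'+1 = 2^m$: there $2^m p + \ell' + 1 = 2^m(p+1)$, so one is comparing indices whose high parts are $p+1$ and $q+1$ rather than $p$ and $q$. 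This is precisely why the lemma must be stated for arbitrary high parts instead of against a fixed reference index; with that formulation the hypothesis still applies and the induction closes. The base case $m=0$ is a direct computation once one notes that $a(k;0) = 1$ for every $k \ge 1$, which also guarantees that constant terms always match.

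Finally I would set $c_d$ equal to the common value of $[z^d]\,a(2^{m+1}n + \ell_m;z)$ for any $m$ with $2^{m+1} > d$ and define $f_{\{b_k\}}(z) = \sum_{d\ge 0} c_d z^d$. Because each $c_d$ inherits membership in $\{0,1\}$, the series is dominated by $\sum_d |z|^d = (1-|z|)^{-1}$, so it converges on the open unit disk and represents an analytic function there. The same bound yields the tail estimate
\[
	\bigl| a(2^{m+1}n + \ell_m;z) - f_{\{b_k\}}(z) \bigr| \le \sum_{d \ge 2^{m+1}} 2\,|z|^d = \frac{2\,|z|^{2^{m+1}}}{1-|z|},
\]
since all coefficients below degree $2^{m+1}$ cancel, and the right-hand side tends to $0$ uniformly on every disk $|z| \le r < 1$. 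Thus the sequences converge uniformly on compact subsets of the unit disk to the single function $f_{\{b_k\}}$, which is analytic and independent of $n$, completing the proof.
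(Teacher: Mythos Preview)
Your proof is correct and takes a genuinely different route from the paper's.

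The paper proceeds through an explicit structural decomposition: it introduces auxiliary polynomial sequences $s_m,t_m$ (depending only on $\{b_k\}$) and proves by induction that
\[
a\!\left(2^{m+1}n+\textstyle\sum_{j=0}^{m}2^jb_j;z\right)=s_m\,a\!\left(n;z^{2^{m+1}}\right)+t_m\,a\!\left(n+1;z^{2^{m+1}}\right).
\]
A degree computation then shows that this equals $s_m+t_m$ plus terms of order $z^{2^{m+1}}$, and the limit function is identified concretely as $\lim_m(s_m+t_m)$. Your argument, by contrast, bypasses the auxiliary sequences entirely: you prove the congruence $a(2^{m+1}p+\ell;z)\equiv a(2^{m+1}q+\ell;z)\pmod{z^{2^{m+1}}}$ directly by induction on $m$ via the defining recursion, handling the carry case $\ell'+1=2^m$ by allowing arbitrary high parts. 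This is more elementary and self-contained. What the paper's approach buys is the explicit identification $f_{\{b_k\}}=\lim_m(s_m+t_m)$, which the paper exploits immediately afterward---to conclude that $f_{\{b_k\}}$ is never a polynomial, that distinct binary sequences yield distinct limits, and in Section~4 to match $F$ and $G$ with particular $f_{\{b_k\}}$. Your proof establishes the proposition cleanly but does not by itself supply that extra structure.
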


It will be shown in the proof of Proposition \ref{analfuncprop} that the functions $f_{\{b_k\}}$ are never polynomials, so that the elements of the Stern polynomial sequence \eqref{spdef} can be thought of as partial sums of appropriate power series with radius of convergence $1$.  In this sense, the clustering behavior of the zeros of the Stern polynomials described in Proposition \ref{unitcircprop} is explained by the classical theorem of Jentzsch \cite{jentzsch}, which states that every point on the circle of convergence of a power series is a limit point of the zeros of its partial sums.

The first few terms of one of the subsequences from Proposition \ref{analfuncprop} are shown in Table \ref{convlist}.  In the context of Hurwitz's Theorem (see e.g. \cite[p. 4]{marden:geometry}), the result is illustrated graphically in Figure \ref{clustconv}, which displays the convergence of the zeros of the polynomials.
\begin{table}[htb]
	\centering
	\begin{tabular}{r|l}
		$m$ & $a\!\left(3\cdot2^{m+1} +\!\sum_{j=0}^m 2^j b_j;z\right)$ \\ \hline
		$0$ & $1 + z^2$ \\
		$1$ & $1 + z^2 + z^6$ \\
		$2$ & $1+z^2+z^4+z^{10}+z^{12}$ \\
		$3$ & $1+z^2+z^4+z^8+z^{18}+z^{20}+z^{24}$ \\
		$4$ & $1+z^2+z^4+z^8+z^{18}+z^{20}+z^{24}+z^{50}+z^{52}+z^{56}$ \\
		$5$ & $1+z^2+z^4+z^8+z^{18}+z^{20}+z^{24}+z^{32}+z^{34}+z^{36}+z^{40}+z^{82}+z^{84}$ \\
		    & \hspace{0.14cm} $+z^{88}+z^{96}+z^{98}+z^{100}+z^{104}$ \\
		$6$ & $1+z^2+z^4+z^8+z^{18}+z^{20}+z^{24}+z^{32}+z^{34}+z^{36}+z^{40}+z^{64}+z^{66}$ \\
		    & \hspace{0.14cm} $+z^{68}+z^{72}+z^{146}+z^{148}+z^{152}+z^{160}+z^{162}+z^{164}+z^{168}+z^{192}$ \\
		    & \hspace{0.14cm} $+z^{194}+z^{196}+z^{200}$
	\end{tabular}
	\vspace{2mm}
	\caption{The first few elements in the sequence from Proposition \ref{analfuncprop} with $n = 3$ and $\{b_k\} = \{0,1,0,0,1,0,0\}$.}
	\label{convlist}
\end{table}

\begin{figure}[h!tb]
	\centering
	\begin{tabular}{ccc}
		\includegraphics[width=0.3\textwidth]{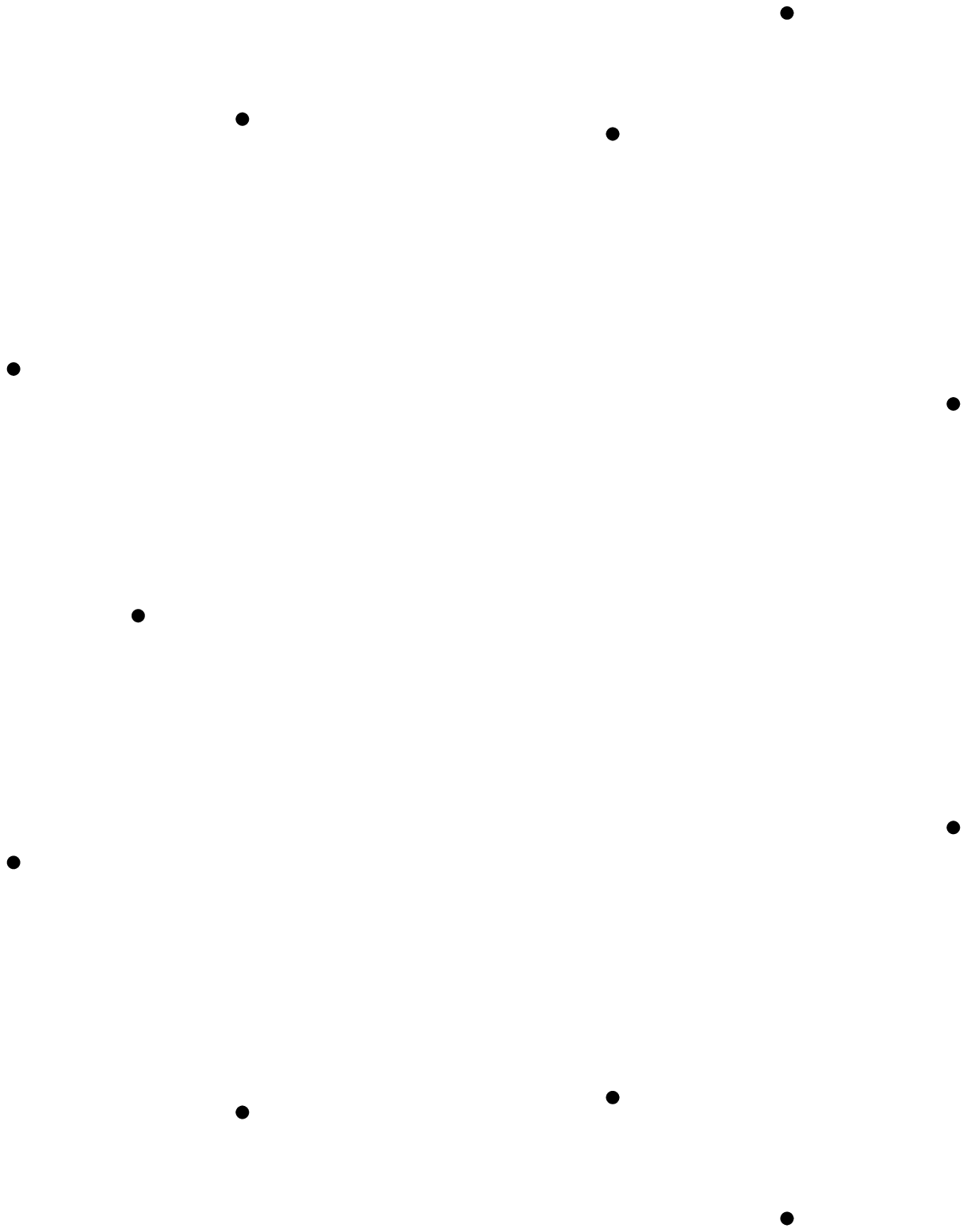}
			& \includegraphics[width=0.3\textwidth]{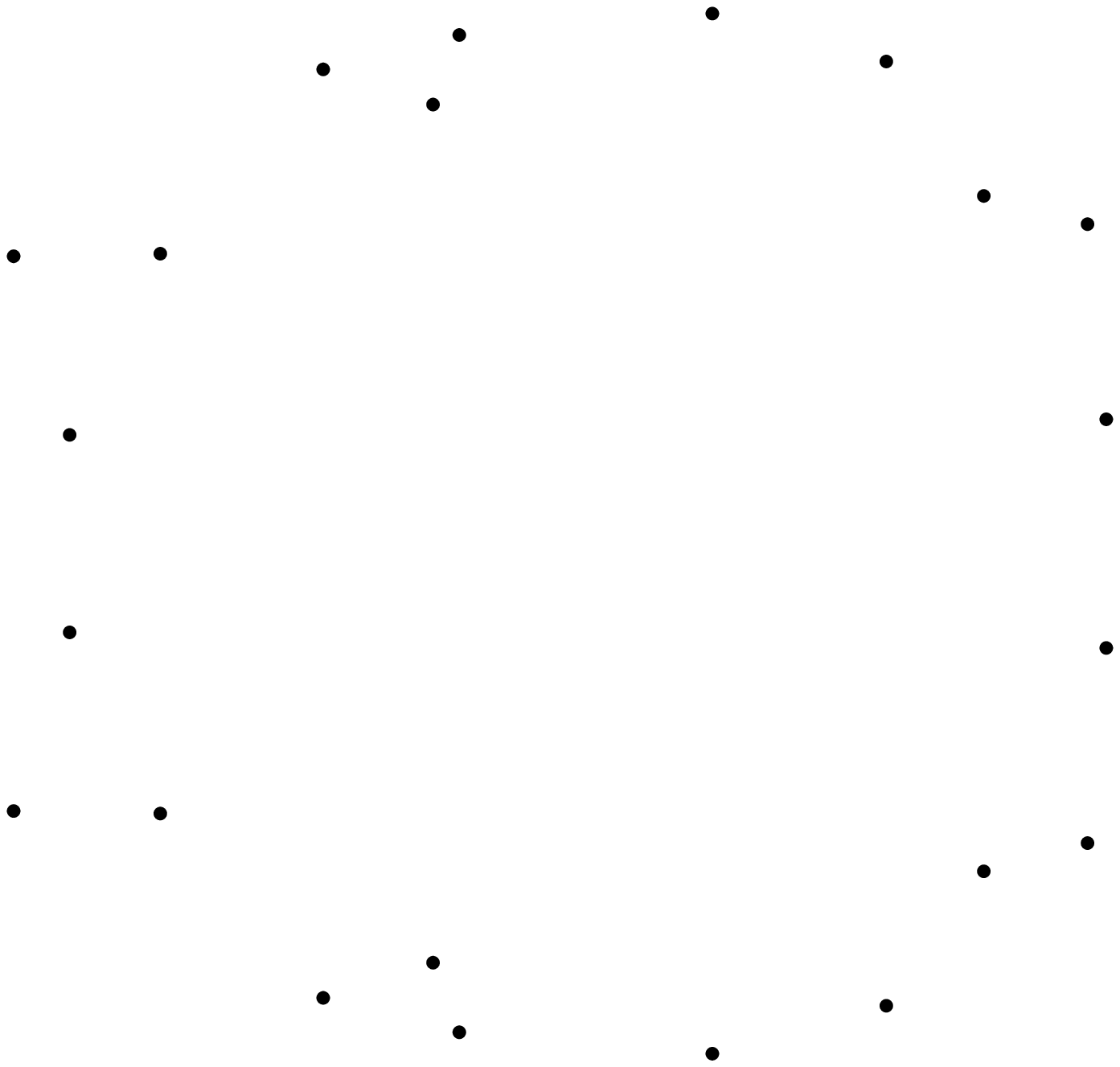}
			& \includegraphics[width=0.3\textwidth]{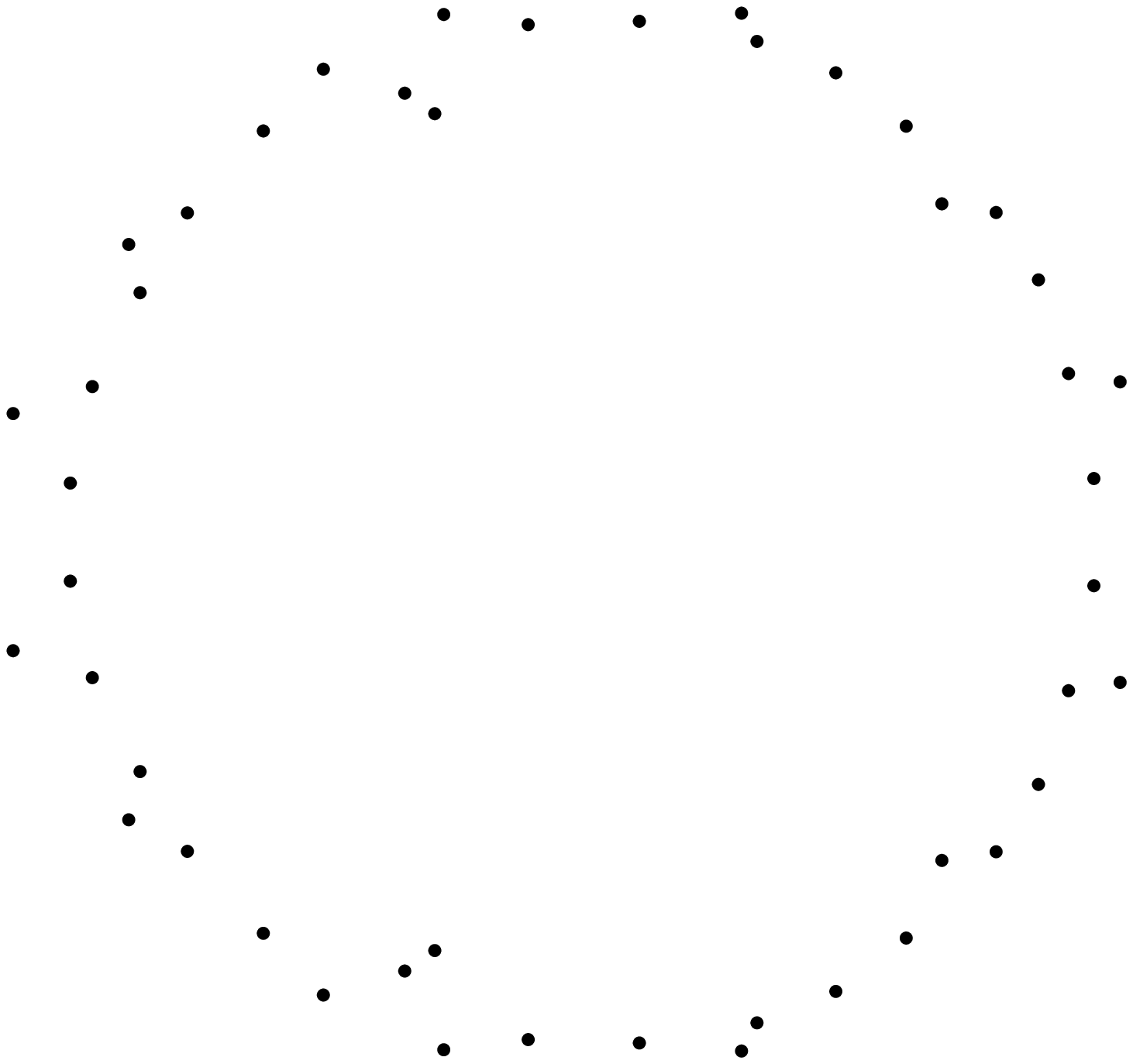} \\
		\includegraphics[width=0.3\textwidth]{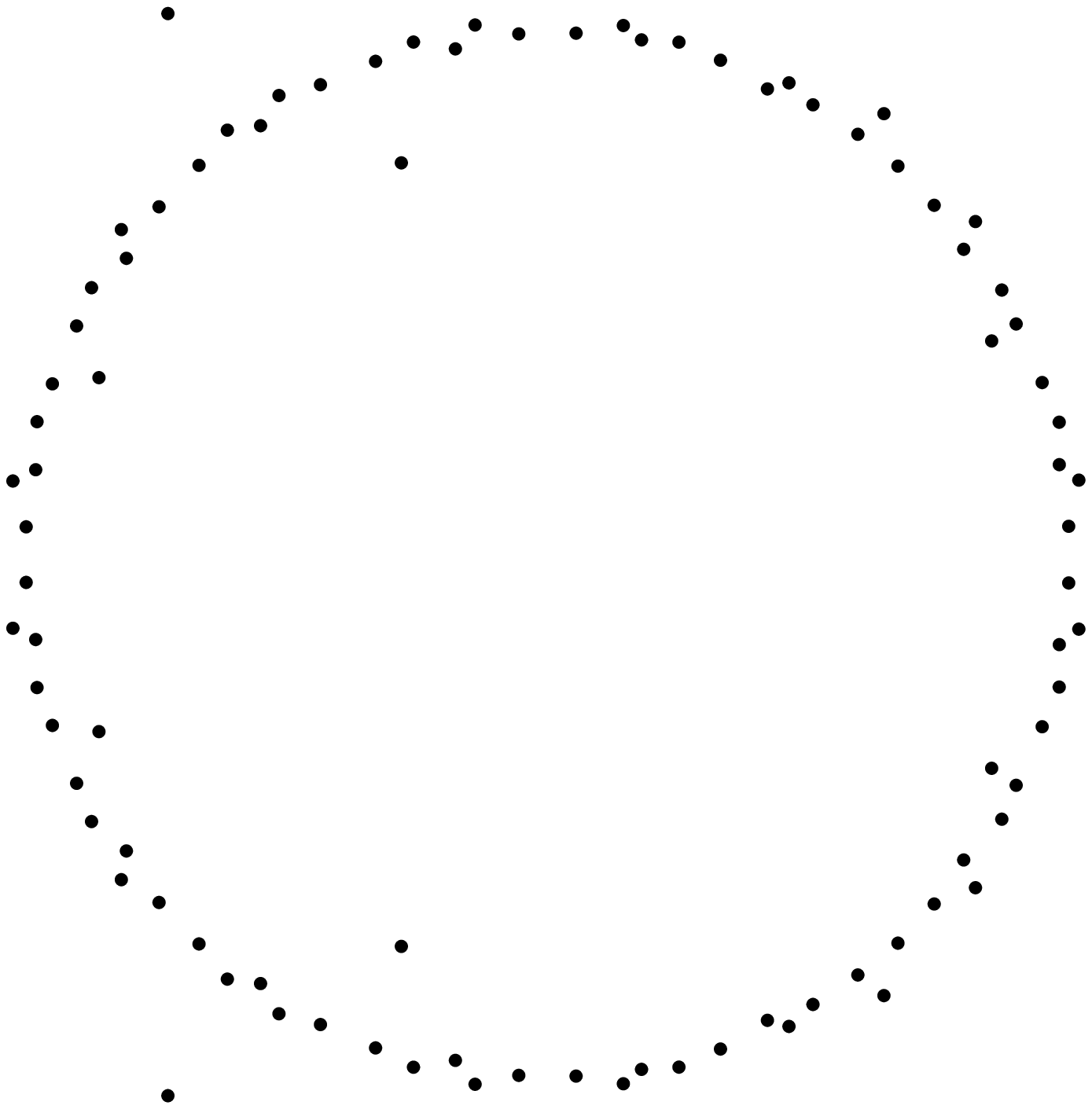}
			& \includegraphics[width=0.3\textwidth]{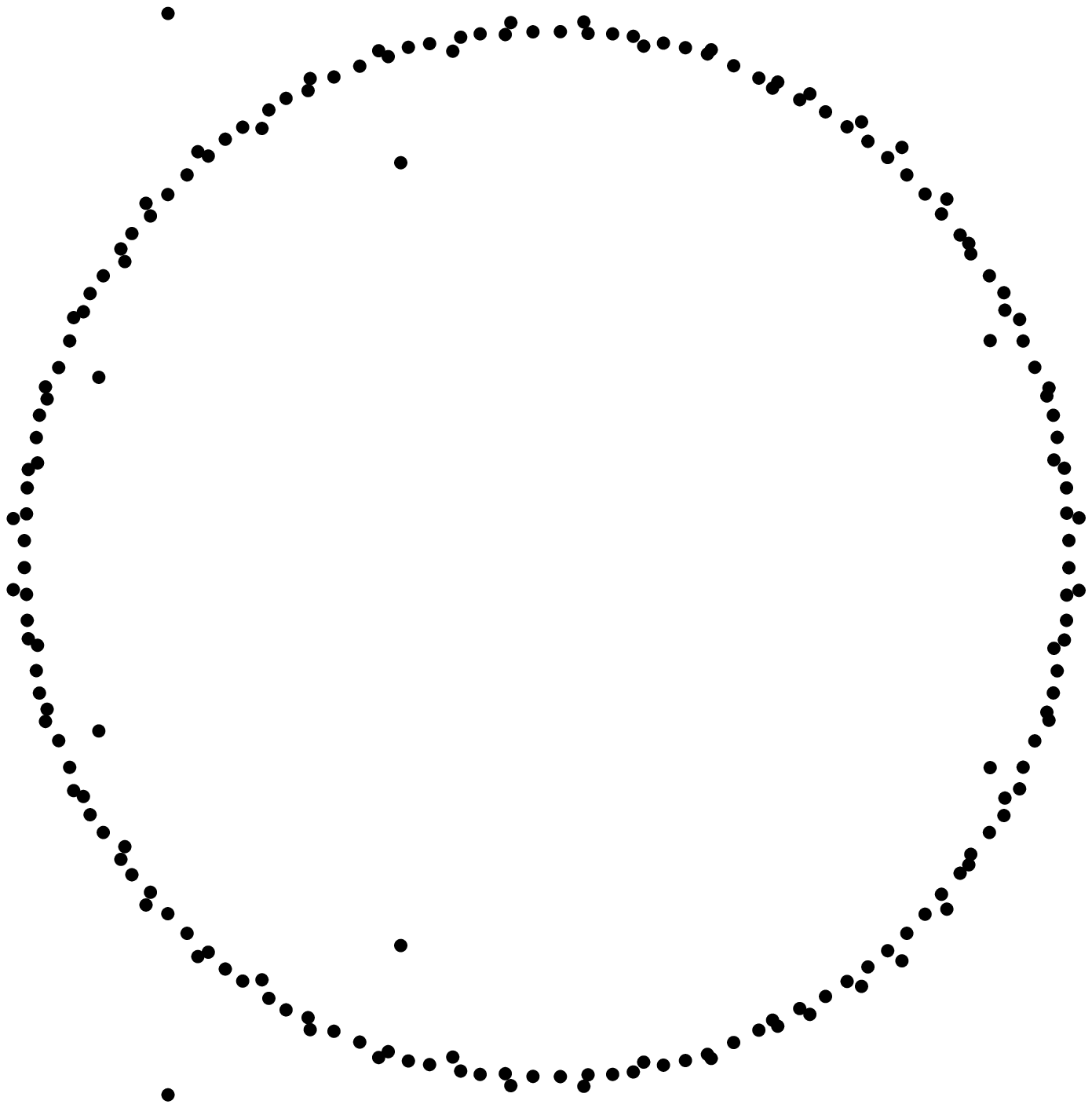}
			& \includegraphics[width=0.3\textwidth]{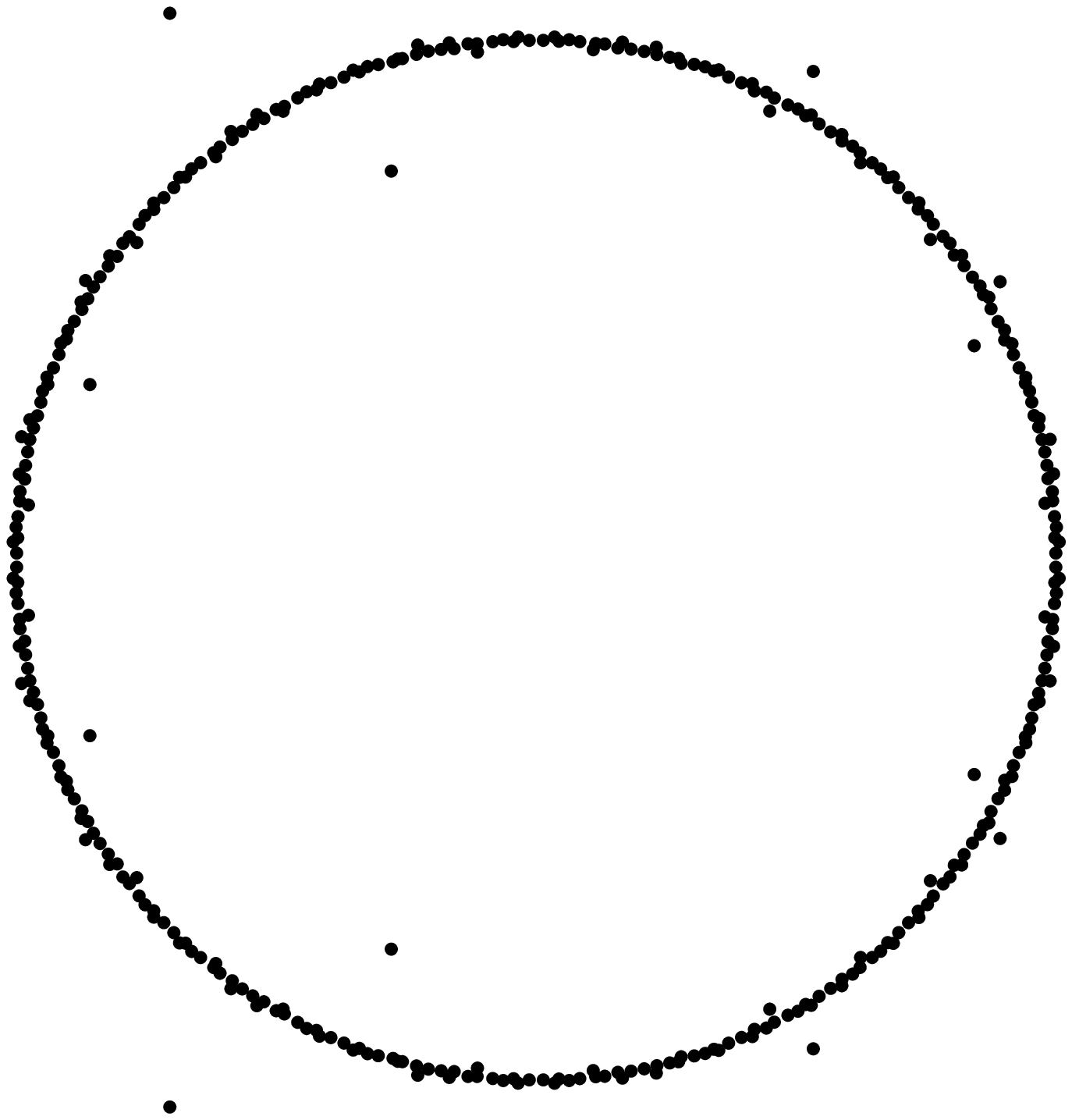} \\
		\includegraphics[width=0.3\textwidth]{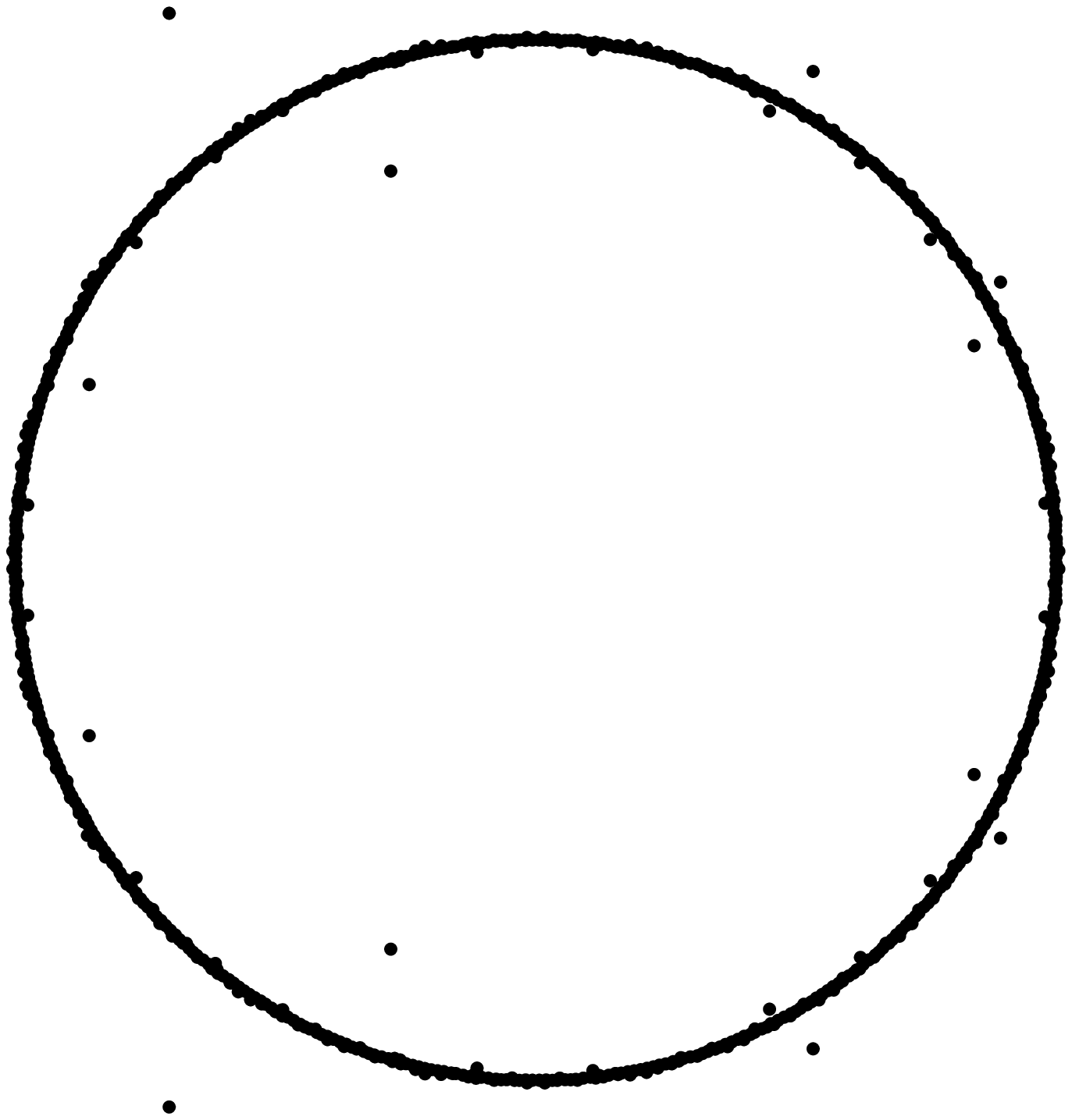}
			& \includegraphics[width=0.3\textwidth]{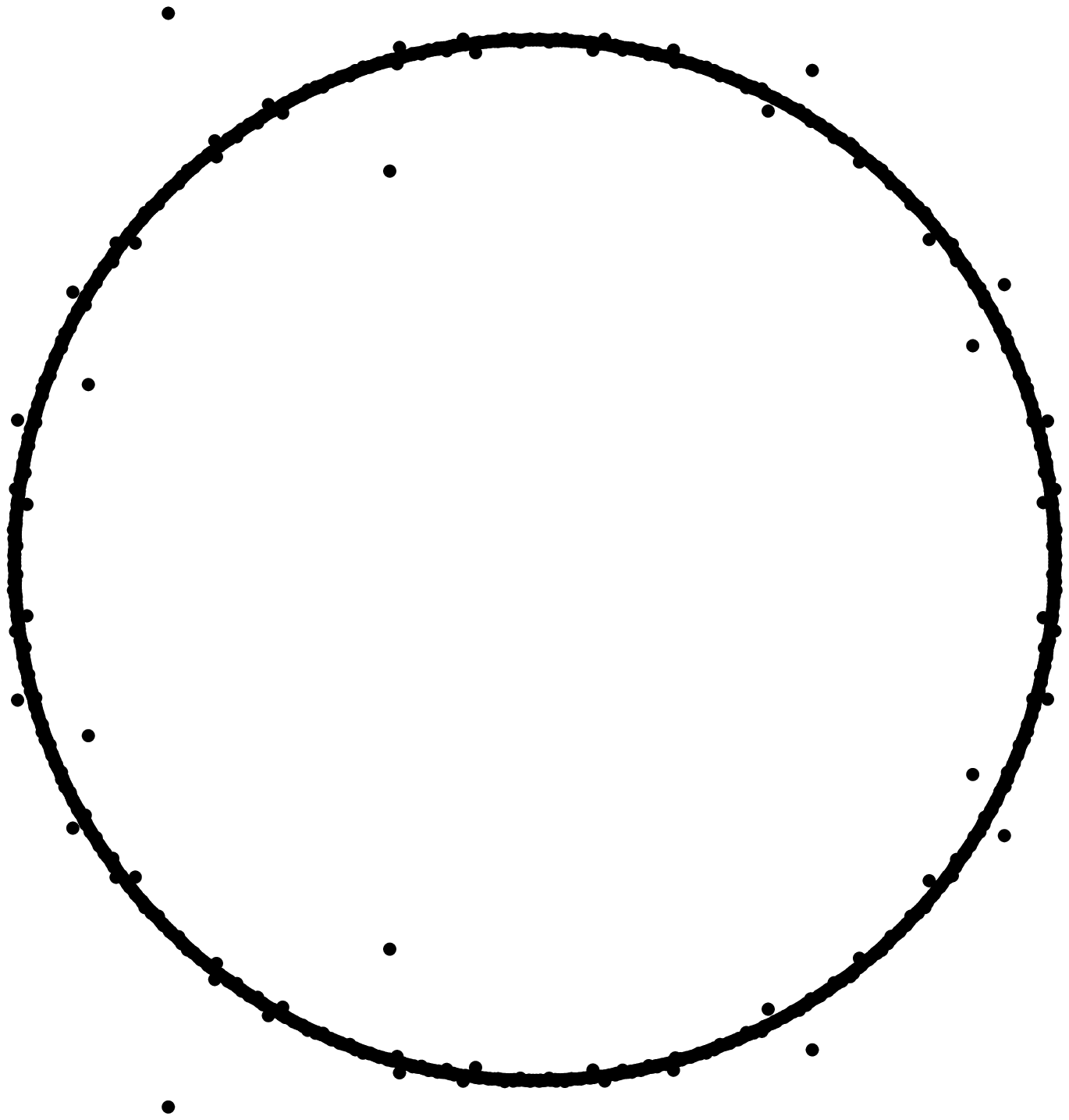}
			& \includegraphics[width=0.3\textwidth]{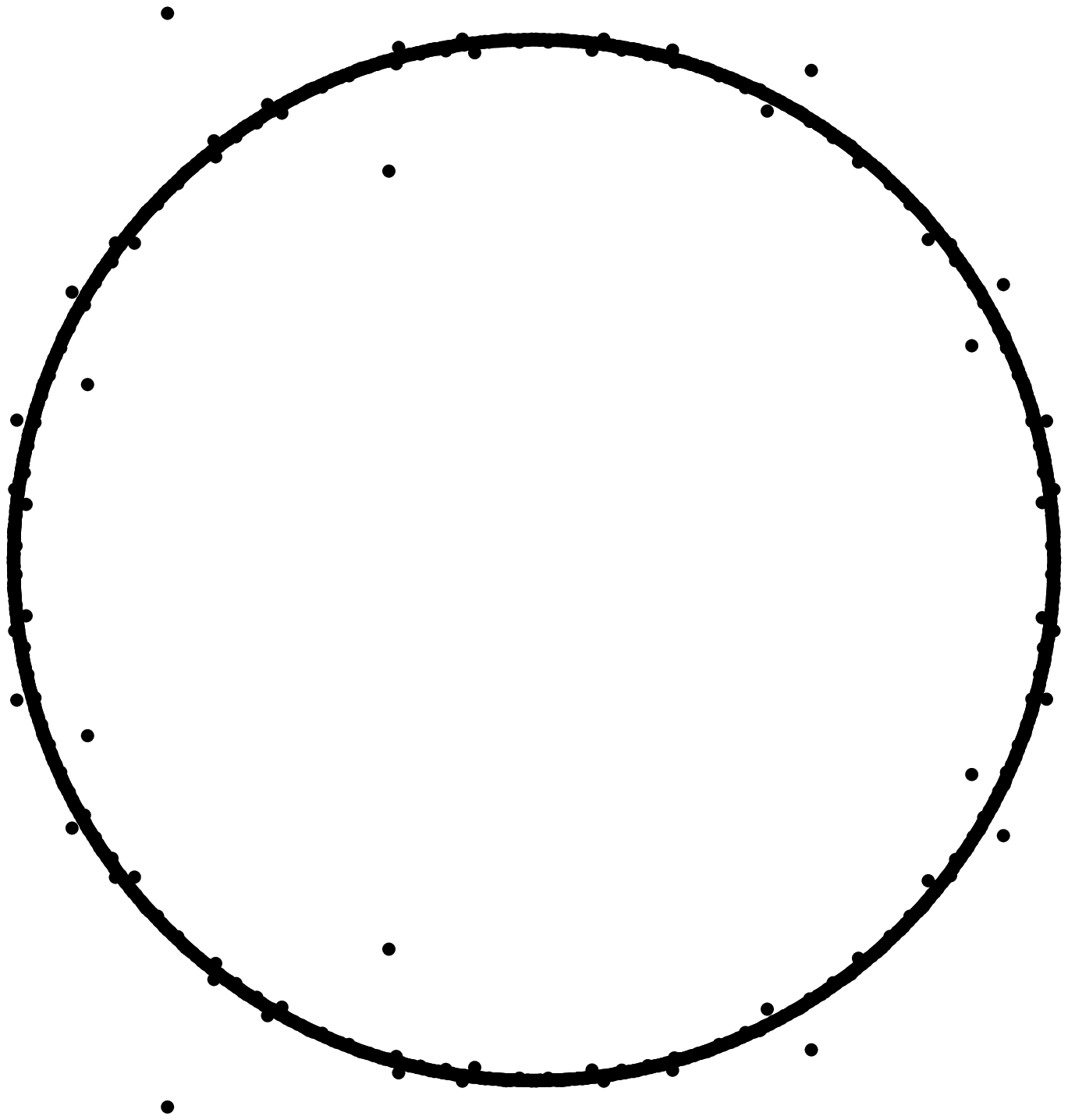}
	\end{tabular}
	\caption{Nine plots containing zeros of elements of the polynomial sequence from Proposition \ref{analfuncprop} with $\{b_k\} = \{1,0,1,0,1,0,1,0,1\}$ and $n = 11$.  The index $m$ increases from $0$ in the top left to $8$ in the bottom right, with $11$, $22$, $46$, $90$, $186$, $362$, $746$, $1450$, and $2986$ zeros in each plot, respectively.}
	\label{clustconv}
\end{figure}

The foundation of the proof of Proposition \ref{analfuncprop} is laid by the following lemma, whose proof is a straightforward exercise in mathematical induction.

\begin{lemma}
	Let $\{b_k\}$ be a binary sequence.  Define the two polynomial sequences $\{s_k\}$ and $\{t_k\}$ by $s_{-1} = 1$, $t_{-1} = 0$, and
	\begin{align*}
		&s_k = (1 - b_k) s_{k-1} + z^{2^k}\!\left[b_k(s_{k-1} - t_{k-1}) + t_{k-1}\right]\!, \\
		&t_k = b_k s_{k-1} + t_{k-1}.
	\end{align*}
	Then
	\[
		a\!\!\left(\!2^{m+1} n +\!\sum_{j = 0}^{m} 2^j b_j\,;\,z\!\right) =\,s_m\,a\!\left(\!n;z^{2^{m+1}}\!\right) + t_m\,a\!\left(\!n+1;z^{2^{m+1}}\!\right)\!.
	\]
	\label{smtm}
\end{lemma}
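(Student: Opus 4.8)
The plan is to argue by induction on $m$, treating the identity as a statement that holds simultaneously for every positive integer $n$ at each stage. Throughout, write $\ell_m = \sum_{j=0}^m 2^j b_j$, so the claim reads
\[
	a(2^{m+1}n + \ell_m;z) = s_m\,a(n;z^{2^{m+1}}) + t_m\,a(n+1;z^{2^{m+1}}).
\]

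For the base case $m=0$ I would simply unwind the definitions. Here $\ell_0 = b_0$, and feeding $s_{-1}=1$, $t_{-1}=0$ into the recursions gives $s_0 = (1-b_0) + b_0 z$ and $t_0 = b_0$. Checking the two possibilities $b_0 \in \{0,1\}$ against the Stern recursion \eqref{spdef} for $a(2n;z)$ and $a(2n+1;z)$ then confirms the identity immediately.

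For the inductive step, assume the identity for $m-1$ and all positive $n$. The key observation is the index decomposition $2^{m+1}n + \ell_m = 2^m(2n + b_m) + \ell_{m-1}$, which lets me apply the inductive hypothesis with $n$ replaced by $2n + b_m$:
\[
	a(2^{m+1}n + \ell_m;z) = s_{m-1}\,a(2n + b_m;z^{2^m}) + t_{m-1}\,a(2n + b_m + 1;z^{2^m}).
\]
Setting $w = z^{2^m}$ and applying \eqref{spdef} to the arguments $2n+b_m$ and $2n+b_m+1$ collapses everything into a linear combination of $a(n;w^2) = a(n;z^{2^{m+1}})$ and $a(n+1;w^2) = a(n+1;z^{2^{m+1}})$. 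Comparing the resulting coefficients with the recursions defining $s_m$ and $t_m$ then closes the induction.

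I expect the only real subtlety to lie in the case split on $b_m$. When $b_m = 0$ the arguments are $2n$ and $2n+1$, so one even and one odd Stern term appear, and the coefficient of $a(n;w^2)$ acquires the extra summand $z^{2^m}t_{m-1}$ that matches the $b_m=0$ branch of the $s_m$ recursion, while $t_m = t_{m-1}$. When $b_m = 1$ the arguments become $2n+1$ and $2n+2 = 2(n+1)$; here the ``carry'' forces both terms to reduce so that the coefficient of $a(n;w^2)$ becomes $z^{2^m}s_{m-1}$, exactly the value to which the $b_m=1$ branch $z^{2^m}[(s_{m-1}-t_{m-1})+t_{m-1}]$ of the $s_m$ recursion simplifies, with $t_m = s_{m-1}+t_{m-1}$. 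Verifying that these two bookkeeping cases reproduce the piecewise definitions of $s_k$ and $t_k$ is the heart of the argument, though it amounts to routine algebra rather than any genuine difficulty.
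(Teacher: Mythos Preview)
Your proposal is correct and is precisely the ``straightforward exercise in mathematical induction'' the paper alludes to in lieu of a written-out proof; the index decomposition $2^{m+1}n+\ell_m=2^m(2n+b_m)+\ell_{m-1}$ followed by the case split on $b_m$ is exactly the intended route, and your bookkeeping matches the recursions for $s_m$ and $t_m$ in both branches.
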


Rearranging the formula in Lemma \ref{smtm} we get
\begin{equation}
	a\!\!\left(\!2^{m+1} n +\!\sum_{j = 0}^{m} 2^j b_j\,;\,z\!\right) = A_{m,n} + (s_m + t_m)\,a\!\left(\!n+1;z^{2^{m+1}}\!\right)\!,
	\label{smtmrearr1}
\end{equation}
where
\begin{equation}
	A_{m,n} = \,s_m\!\left[\!a\!\left(\!n;z^{2^{m+1}}\!\right) - a\!\left(\!n+1;z^{2^{m+1}}\!\right)\!\right]\!.
	\label{smtmrearr2}
\end{equation}
The following calculation will be useful.

\begin{lemma}
	The degree of the lowest power of $z$ in $A_{m,n}$ is at least $2^{m+1} + \sum_{j=0}^{m} 2^{j} b_j$ when $n > 1$.
	\label{degbd}
\end{lemma}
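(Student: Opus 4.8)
The plan is to factor the order of $A_{m,n}$ (the degree of its lowest power of $z$) as the sum of the orders of the two factors appearing in \eqref{smtmrearr2}, namely $s_m$ and $a(n;z^{2^{m+1}}) - a(n+1;z^{2^{m+1}})$, and to bound each factor separately. Since the coefficient ring is an integral domain, the order of a product equals the sum of the orders of its factors, so such a factorization is legitimate.

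First I would determine the order of $s_m$, which I claim equals $\sum_{j=0}^{m} 2^j b_j$; I would prove this by induction on $m$ using the recurrences for $s_k$ and $t_k$. The crucial simplification is that when $b_k = 1$ the recurrence collapses to $s_k = z^{2^k} s_{k-1}$, raising the order by exactly $2^k$; since $\sum_{j=0}^{k} 2^j b_j = 2^k + \sum_{j=0}^{k-1} 2^j b_j$ in this case, the induction step is immediate. When $b_k = 0$ we instead have $s_k = s_{k-1} + z^{2^k} t_{k-1}$, and here the newly introduced term has order at least $2^k$, which strictly exceeds the order $\sum_{j=0}^{k-1} 2^j b_j \leq 2^k - 1$ of $s_{k-1}$ by the inductive hypothesis. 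Hence the lowest-order term of $s_{k-1}$ survives, the order of $s_k$ is unchanged, and it matches $\sum_{j=0}^{k} 2^j b_j$ because $b_k = 0$.

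Next I would handle the bracketed difference. Using the defining recurrence \eqref{spdef}, a one-line induction gives $a(n;0) = 1$ for every $n \geq 1$: the even case yields $a(2n;0) = a(n;0)$ and the odd case yields $a(2n+1;0) = a(n+1;0)$, with base value $a(1;0)=1$. Consequently $a(n;z)$ and $a(n+1;z)$ share the constant term $1$, so $a(n;w) - a(n+1;w)$ has vanishing constant term and therefore order at least $1$ in $w$; substituting $w = z^{2^{m+1}}$ shows that $a(n;z^{2^{m+1}}) - a(n+1;z^{2^{m+1}})$ has order at least $2^{m+1}$ in $z$. The hypothesis $n > 1$ merely excludes the degenerate case $n = 1$, in which $a(1;z) = a(2;z) = 1$ forces this difference, and hence $A_{m,n}$, to vanish identically.

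Combining the two estimates, the order of the product $A_{m,n}$ is at least $\sum_{j=0}^{m} 2^j b_j + 2^{m+1}$, which is precisely the assertion. I expect the only delicate point to be the $b_k = 0$ step of the induction in the first paragraph: one must verify that the new term $z^{2^k} t_{k-1}$ genuinely sits strictly above the existing lowest-order term of $s_{k-1}$, so that no cancellation can lower the order. This is guaranteed by the elementary bound $\sum_{j=0}^{k-1} 2^j b_j \leq 2^k - 1 < 2^k$, and everything else is routine bookkeeping.
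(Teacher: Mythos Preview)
Your proof is correct and follows essentially the same strategy as the paper: factor the order of $A_{m,n}$ as the order of $s_m$ plus the order of the bracketed difference, establish that the former equals $\sum_{j=0}^{m} 2^j b_j$, and observe that the latter is at least $2^{m+1}$. Your case-based induction on $m$ for the order of $s_m$ is a tidier version of the paper's argument (which instead unrolls the recursion by tracking the successive indices $k_0,k_1,\ldots$ where $b_k=1$), and you make explicit the fact $a(n;0)=1$ that the paper leaves to the reader when it writes ``The result follows.''
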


\begin{proof}
Suppose that $1$ occurs at least once in the sequence $\{b_k\}_{0 \leq k \leq m}$ and let $k_0$ be the index of its first occurrence.  We then have $s_{k_0-1} = 1$ and $t_{k_0-1} = 0$, so that $s_{k_0} = z^{2^{k_0}}$\!.  If $1$ occurs at least twice in $\{b_k\}_{0 \leq k \leq m}$, let $k_1$ be the index of its second occurrence.  Then
\[
	s_{k_1-1}\,= \sum_{j=0}^{k_1-k_0-1} z^{2^{k_0+j}},
\]
which tells us that the degree of the lowest power of $z$ in $s_{k_1}$ is $2^{k_0} + 2^{k_1}$.  Continuing in this way we see that, if $\{k_0, k_1, \ldots, k_p\}$ is the complete set of indices of occurrences of 1 in $\{b_k\}_{0 \leq k \leq m}$, the degree of the lowest power of $z$ in $s_m$ is
\[
	\sum_{j=0}^{p} 2^{k_j}\,=\,\sum_{j=0}^{m} 2^{j} b_j.
\]
The result follows.
\end{proof}

This, combined with Proposition \ref{dilstothm}, will allow us to prove Proposition \ref{analfuncprop}.

\begin{proof}[Proof of Proposition \ref{analfuncprop}]
We will first prove the result for $n = 1$.  If $1$ never occurs in the sequence $\{b_k\}$, then the ``polynomials'' in question are just the constant function with value $1$, so suppose it occurs at least once.  In the notation of Proposition \ref{dilstothm} we have
\[
	e\!\!\left(\!2^{m+1} +\!\sum_{j=0}^{m} 2^j b_j\!\right) =\,k_0,
\]
where $k_0$ is the index of the first occurrence of $1$ in $\{b_k\}$.  Proposition \ref{dilstothm} then tells us that
\begin{equation}
	\deg(s_m + t_m) \,=\, \deg\, a\!\!\left(\!2^{m+1} +\!\sum_{j = 0}^{m} 2^j b_j\,;\,z\!\right) =\, 2^m +\!\sum_{j = k_0 + 1}^{m}\! 2^{j-1} b_j.
	\label{smtmdeg}
\end{equation}

Note that
\[
	s_m + t_m = s_{m-1} + t_{m-1} + z^{2^m}\!\left[b_m (s_{m-1} - t_{m-1}) + t_{m-1}\right]\!.
\]
From \eqref{smtmdeg} we have that the degree of $s_{m-1} + t_{m-1}$ is strictly less than $2^m$, so that the limit of $s_m + t_m$ as $m \to \infty$ is a formal power series $\sum \alpha_j z^j$ with $\alpha_0 = 1$ and $\alpha_j = 0$ or $\alpha_j = 1$ for all $j > 0$.  By comparison with the geometric series $\sum z^j$ this series converges on the open unit disk to an analytic function $f_{\{b_k\}}$.

Now, dropping any restrictions on the sequence $\{b_k\}$, we will consider the case where $n > 1$.  Equation \eqref{smtmdeg} implies that
\begin{equation*}
	\deg\!\left(s_m + t_m\right) <\, 2^{m+1}.
\end{equation*}
According to Lemma \ref{degbd}, the value on the right-hand side of the above inequality is at most the degree of the lowest power of $z$ in the polynomial $A_{m,n}$ defined in \eqref{smtmrearr1} and \eqref{smtmrearr2}.  Since the constant term of $a(n+1;z^{2^{m+1}})$ is $1$ we can now write
\[
	a\!\!\left(\!2^{m+1} n +\!\sum_{j = 0}^{m} 2^j b_j\,;\,z\!\right) =\,s_m + t_m + \text{higher order terms,}
\]
where the higher order terms are $O\!\left(\!z^{2^{m+1}}\!\right)\!$ as $z \to 0$.  Hence, for any $n > 0$,
\[
	\lim_{m \to \infty} a\!\!\left(\!2^{m+1} n +\!\sum_{j = 0}^{m} 2^j b_j\,;\,z\!\right) =\, \lim_{m \to \infty} \!\left(s_m + t_m\right) =\, f_{\{b_k\}}
\]
on the open unit disk.
\end{proof}

It follows from the above proof and Lemma \ref{smtm} that no two distict binary sequences may produce the same power series.

We remark that the zeros of the polynomials seem to converge outside of the unit circle as well, but that the series in question diverge there.  Carlitz's study \cite{carlitz:bell} of the reciprocals of the Stern polynomials may be of some use in exploring this phenomenon.  We also note that it is not clear whether the functions $f_{\{b_k\}}$ ever have the unit circle as a natural boundary.  The method used in the proof of Proposition \ref{analfuncprop} does not give enough information about the possible degrees of consecutive terms in the power series to apply, for example, the Ostrowski-Hadamard gap theorem.

\section{The subsequences of Dilcher and Stolarsky}
\label{dilstoseqs}

Define the integer sequence $\{\xi_n\}$ by
\[
	\xi_n = \frac{1}{3}\left(2^n - (-1)^n \right)\!,
\]
and the polynomials $\{\phi_{n}\}$ by
\[
	\phi_n(q) = a(\xi_n ; q).
\]
These polynomials were studied by Dilcher and Stolarsky in \cite{dilcher:sternpolys02} (we continue their use of $q$ as the variable).  In particular, they defined the limit functions
\begin{align*}
	&F(q) = \lim_{n \to \infty} \phi_{2n}(q), \\
	&G(q) = \lim_{n \to \infty} \phi_{2n+1}(q).
\end{align*}
Let us find these sequences among those considered in the present paper.

Let $\{\beta_n\}$ be the binary sequence $\{1,0,1,0,\ldots\}$.  Then
\begin{align*}
	2^{2n-2} +\!\sum_{j = 0}^{2n-3} 2^j \beta_j \,&=\, \sum_{j = 0}^{n-1} 4^j \\
																								&=\, \frac{1}{3}\left(4^n - 1\right) \\
																								&=\, \xi_{2n},
\end{align*}
so that
\[
	s_{2n-3} + t_{2n-3} = \phi_{2n},
\]
where $s_n$ and $t_n$ are defined as in Lemma \ref{smtm} for the binary sequence $\{\beta_n\}$.  From this we see that $F = f_{\{\beta_n\}}$.

Let $\{\gamma_n\}$ be the binary sequence defined by $\gamma_0 = 1$ and $\gamma_n = \beta_{n-1}$ for $n \geq 1$, so that $\{\gamma_n\} = \{1,1,0,1,0,\ldots\}$.  We proceed similarly, finding
\begin{align*}
	2^{2n-1} +\!\sum_{j = 0}^{2n-2} 2^j \gamma_j \,&=\, 1 + \frac{1}{2}\sum_{j = 1}^{n} 4^j \\
																								 &=\, \frac{1}{3}\left(2^{2n+1} + 1\right) \\
																								 &=\, \xi_{2n+1},
\end{align*}
so that
\[
	s_{2n-2} + t_{2n-2} = \phi_{2n+1},
\]
where $s_n$ and $t_n$ are defined as in Lemma \ref{smtm} for the binary sequence $\{\gamma_n\}$.  Thus we have $G = f_{\{\gamma_n\}}$.

The study of these subsequences was motivated by the fact that $\phi_{n}(1) = F_{n}$, the $n^\text{th}$ Fibonacci number \cite{lehmer:stern}.  Inspired by the recurrence $F_{n} = F_{n-1} + F_{n-2}$, Dilcher and Stolarsky derived the relations
\begin{equation*}
	\begin{split}
	&\phi_{2n}(q) = \phi_{2n-1}(q^2) + q\,\phi_{2n-2}(q^4) \\
	&\text{\hspace{22mm} and} \\
	&\phi_{2n+1}(q) = q\,\phi_{2n}(q^2) + \phi_{2n-1}(q^4),
	\end{split}
\end{equation*}
and, drawing from the identity $F_{n-1} F_{n+1} - F_{n}^{2} = (-1)^{n}$,
\begin{equation*}
	\begin{split}
	&\phi_{2n+1}(q) \phi_{2n-1}(q^2) - q\,\phi_{2n}(q) \phi_{2n}(q^2) = 1 \\
	&\text{\hspace{29.7mm} and} \\
	&\phi_{2n+1}(q) \phi_{2n+1}(q^2) - q\,\phi_{2n+2}(q) \phi_{2n}(q^2) = 1.
	\end{split}
\end{equation*}
Taking the limit as $n$ goes to infinity immediately yields interesting functional relations between $F$ and $G$, but perhaps it is more crucial that $F$ and $G$ satisfy
\begin{equation*}
	\begin{split}
	&F(q) G(q^{2^{2n}}) - q^{\xi_{2n}} G(q) F(q^{2^{2n}}) = \phi_{2n}(q), \\
	&G(q) G(q^{2^{2n-1}}) - q^{\xi_{2n-1}} F(q) F(q^{2^{2n-1}}) = \phi_{2n-1}(q),
	\end{split}
\end{equation*}
for all $n \geq 1$ \cite{dilcher:sternpolys02,vsemirnov:sternpolys}.  The sequence $\{\phi_{n}\}$ is unique in its relation to the Fibonacci sequence, but we suspect that $\{\phi_{2n}\}$ and $\{\phi_{2n+1}\}$ are are not the only subsequences introduced in Proposition \ref{analfuncprop} which satisfy these types of identities.

For example, consider the nonzero binary sequence with the simplest structure, $\{\delta_n\} = \{1,1,1,\ldots\}$.  If we define the integer sequence $\{\nu_n\}$ by
\[
	\nu_n = 2^{n-1} + \sum_{j=0}^{n-2} 2^j \delta_n,
\]
and the polynomial sequence $\{\psi_n\}$ by
\[
	\psi_n(q) = a(\nu_n;q),
\]
then we have
\begin{equation}
	\nu_n = 2 \nu_{n-1} + 1 = \nu_{n-1} + 2^{n-1}
	\label{index}
\end{equation}
and, from \eqref{spdef},
\begin{equation}
	\psi_n(q) = q\psi_{n-1}(q^2) + 1.
	\label{recurrence}
\end{equation}
Induction will show that
\begin{equation}
	\psi_n(q) + q^{\delta_n} = \psi_{n+1}(q),
	\label{recurrence2}
\end{equation}
which, when combined with \eqref{recurrence}, yields
\[
	\psi_n(q) \psi_n(q^2) - \psi_{n+1}(q) \psi_{n-1}(q^2) = q^{2\delta_{n-1}}.
\]
Unfortunately, no functional relation can be obtained from this by letting $n \to \infty$.  On the other hand, we also have from \eqref{recurrence} the identity
\begin{equation}
	\psi_n(q) - q^3\psi_{n-2}(q^4) = 1 + q = \psi_2(q),
	\label{basecase}	
\end{equation}
and if we let $n \to \infty$ we deduce that
\begin{equation}
	f_{\{\delta_n\}}(q) - q^3f_{\{\delta_n\}}(q^4) = \psi_2(q).
	\label{functional}	
\end{equation}
Equation \eqref{basecase}, combined with a simple induction argument using \eqref{index}, \eqref{recurrence}, and \eqref{recurrence2}, yields the following result.

\begin{prop}
\begin{equation}
	\psi_n(q) - q^{\delta_{2m}} \psi_{n-2m}(q^{4^m}) = \psi_{2m}(q)
	\label{mainrecurrence}
\end{equation}
for all $2 \leq 2m < n$.
\end{prop}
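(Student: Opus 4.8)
The plan is to induct on $m$, using \eqref{basecase} as an engine that strips two indices off $\psi_n$ each time it is applied. The base case $m = 1$ is exactly \eqref{basecase}: there $\psi_{n-2}(q^{4})$ is $\psi_{n-2\cdot 1}(q^{4^{1}})$, the exponent $3$ is $\delta_{2}$, and the identity is valid precisely for $n > 2$, which is the range $2 \le 2m < n$ at $m = 1$. One may also regard \eqref{basecase} itself as the result of applying \eqref{recurrence} twice, which is the genuine source of the two-index stride.

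For the inductive step I would assume \eqref{mainrecurrence} for some $m \ge 1$ and every $n > 2m$, and then feed the remainder term through \eqref{basecase} once more. Substituting $n \mapsto n - 2m$ and $q \mapsto q^{4^m}$ into \eqref{basecase}—legitimate whenever $n - 2m \ge 3$, i.e. exactly when $n > 2(m+1)$—rewrites
\[ \psi_{n-2m}\!\left(q^{4^m}\right) = q^{3\cdot 4^m}\,\psi_{n-2m-2}\!\left(q^{4^{m+1}}\right) + \psi_2\!\left(q^{4^m}\right)\!. \]
Inserting this into the inductive hypothesis and collecting powers of $q$ yields
\[ \psi_n(q) - q^{\,\delta_{2m} + 3\cdot 4^m}\,\psi_{n-2m-2}\!\left(q^{4^{m+1}}\right) = \psi_{2m}(q) + q^{\delta_{2m}}\,\psi_2\!\left(q^{4^m}\right)\!, \]
so it remains only to recognize the new exponent as $\delta_{2m+2}$ and the new right-hand side as $\psi_{2m+2}(q)$.

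Both recognitions are bookkeeping governed by \eqref{index} and \eqref{recurrence2}. Iterating \eqref{index} twice gives the exponent identity $\delta_{2m} + 3\cdot 4^m = \delta_{2m+2}$, so the left-hand side already has the desired shape. For the right-hand side I would apply \eqref{recurrence2} twice to obtain $\psi_{2m+2}(q) = \psi_{2m}(q) + q^{\delta_{2m}} + q^{\delta_{2m+1}}$, and then expand $\psi_2(q) = 1 + q$ so that $q^{\delta_{2m}}\psi_2(q^{4^m}) = q^{\delta_{2m}} + q^{\delta_{2m} + 4^m}$; the two expressions coincide once \eqref{index} is used to match $\delta_{2m+1} = \delta_{2m} + 4^m$. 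Since the step also preserves the range ($n - 2m \ge 3$ is exactly $n > 2(m+1)$), the induction closes.

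I expect the only real friction to be the exponent bookkeeping rather than anything structural: one must remember that scaling the argument of \eqref{basecase} by $q^{4^m}$ turns its factor $q^3$ into $q^{3\cdot 4^m}$, and must verify that the constant-order leftover genuinely telescopes up from $\psi_{2m}$ to $\psi_{2m+2}$ rather than stalling. All of this is controlled by the two consequences $\delta_{2m+2} = \delta_{2m} + 3\cdot 4^m$ and $\delta_{2m+1} = \delta_{2m} + 4^m$ of \eqref{index}; with these recorded, the substitution collapses exactly as needed and no estimate or auxiliary lemma is required.
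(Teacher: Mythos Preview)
Your argument is correct and matches the paper's intended proof: the paper simply states that \eqref{basecase} together with an induction using \eqref{index}, \eqref{recurrence}, and \eqref{recurrence2} gives the result, and your write-up is exactly that induction (with \eqref{recurrence} entering only through \eqref{basecase}, as you note). The exponent and right-hand-side bookkeeping via $\delta_{2m+2}=\delta_{2m}+3\cdot 4^m$ and $\delta_{2m+1}=\delta_{2m}+4^m$ is precisely what is needed, and the range check $n>2(m+1)$ is handled correctly.
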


Holding $m$ fixed in \eqref{mainrecurrence} and letting $n \to \infty$ gives a generalization of \eqref{functional}.

\begin{coro}
\[
	f_{\{\delta_n\}}(q) - q^{\delta_{2m}}f_{\{\delta_n\}}(q^{4^m}) = \psi_{2m}(q)
\]
for all $m \geq 1$.
\end{coro}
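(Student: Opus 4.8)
The plan is to derive the Corollary from the Proposition by fixing $m$ and letting $n \to \infty$ in \eqref{mainrecurrence}, exactly as the sentence preceding the statement indicates. The only ingredient that must be supplied is that the polynomials $\psi_n$ converge, on the open unit disk, to the analytic function $f_{\{\delta_n\}}$; granting this, the result is a routine passage to the limit in a fixed algebraic identity.

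First I would identify $\{\psi_n\}$ with one of the subsequences of Proposition \ref{analfuncprop}. Taking the fixed index there to be $1$ and the binary sequence to be $\{\delta_n\} = \{1,1,1,\dots\}$, the $m$-th term of that subsequence is
\[
	a\!\left(2^{m+1} + \sum_{j=0}^{m} 2^j \delta_j\,;\,q\right) = a\!\left(2^{m+2} - 1\,;\,q\right) = \psi_{m+2}(q),
\]
since $\sum_{j=0}^{m} 2^j \delta_j = 2^{m+1} - 1$ and $\nu_{m+2} = 2^{m+2} - 1$. Proposition \ref{analfuncprop} therefore yields
\[
	\lim_{n \to \infty} \psi_n(q) = f_{\{\delta_n\}}(q)
\]
on the open unit disk, the convergence being uniform on compact subsets by the comparison with the geometric series used in that proof.

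With this in hand I would fix $m \geq 1$ and a point $q$ with $|q| < 1$ and pass to the limit $n \to \infty$ in \eqref{mainrecurrence}. For all sufficiently large $n$ the hypothesis $2 \leq 2m < n$ holds, so the identity is available. The first term tends to $f_{\{\delta_n\}}(q)$ by the previous paragraph, while the right-hand side $\psi_{2m}(q)$ is independent of $n$. For the middle term I would note that the index $n - 2m \to \infty$ and that $q^{4^m}$ again lies in the open unit disk, so $\psi_{n-2m}(q^{4^m}) \to f_{\{\delta_n\}}(q^{4^m})$. Passing to the limit gives
\[
	f_{\{\delta_n\}}(q) - q^{\delta_{2m}} f_{\{\delta_n\}}(q^{4^m}) = \psi_{2m}(q)
\]
for every $q$ in the open unit disk, which is the assertion.

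The only point requiring care is the convergence of the composite term $\psi_{n-2m}(q^{4^m})$, where one must ensure the argument $q^{4^m}$ stays inside the disk of convergence. Since $q \mapsto q^{4^m}$ maps the open unit disk into itself, this is immediate, and evaluating the convergent sequence $\psi_k \to f_{\{\delta_n\}}$ at the fixed interior point $q^{4^m}$ settles it; alternatively, local uniform convergence together with the continuity of $q \mapsto q^{4^m}$ gives the same conclusion. I expect no genuine obstacle here, as the substance of the result already resides in \eqref{mainrecurrence} and in the convergence furnished by Proposition \ref{analfuncprop}.
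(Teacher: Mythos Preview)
Your proposal is correct and follows exactly the approach the paper indicates: fix $m$, let $n\to\infty$ in \eqref{mainrecurrence}, and use the convergence $\psi_n \to f_{\{\delta_n\}}$ on the open unit disk supplied by Proposition~\ref{analfuncprop}. You have in fact supplied more detail than the paper does (which states only the one-line remark preceding the Corollary), in particular the explicit identification $\psi_{m+2}(q)=a(2^{m+2}-1;q)$ with the $n=1$ subsequence of Proposition~\ref{analfuncprop}.
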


\begin{table}[htb]
	\centering
	\begin{tabular}{r|l||r|l}
		$n$ & $a(n;z)$ & $n$ & $a(n;z)$ \\ \hline
		$1$ & $1$ & $17$ & $1 + z + z^2 + z^4 + z^8$ \\
		$2$ & $1$ & $18$ & $1 + z^2 + z^4 + z^8$ \\
		$3$ & $1+z$ & $19$ & $1 + z + z^3 + z^4 + z^5 + z^8 + z^9$ \\
		$4$ & $1$ & $20$ & $1 + z^4 + z^8$ \\
		$5$ & $1 + z + z^2$ & $21$ & $1 + z + z^2 + z^5 + z^6 + z^8 + z^9 + z^{10}$ \\
		$6$ & $1 + z^2$ & $22$ & $1 + z^2 + z^6 + z^8 + z^{10}$ \\
		$7$ & $1 + z + z^3$ & $23$ & $1 + z + z^3 + z^7 + z^8 + z^9 + z^{11}$ \\
		$8$ & $1$ & $24$ & $1 + z^8$ \\
		$9$ & $1 + z + z^2 + z^4$ & $25$ & $1 + z + z^2 + z^4 + z^9 +  z^{10} + z^{12}$ \\
		$10$ & $1 + z^2 + z^4$ & $26$ & $1 + z^2 + z^4 + z^{10} + z^{12}$ \\
		$11$ & $1 + z + z^3 + z^4 + z^5$ & $27$ & $1 + z + z^3 + z^4 + z^5 + z^{11} + z^{12} + z^{13}$ \\
		$12$ & $1 + z^4$ & $28$ & $1 + z^4 + z^{12}$ \\
		$13$ & $1 + z + z^2 + z^5 + z^6$ & $29$ & $1 + z + z^2 + z^5 + z^6 + z^{13} + z^{14}$ \\
		$14$ & $1 + z^2 + z^6$ & $30$ & $1 + z^2 + z^6 + z^{14}$ \\
		$15$ & $1 + z + z^3 + z^7$ & $31$ & $1 + z + z^3 + z^7 + z^{15}$ \\
		$16$ & $1$ & $32$ & $1$
	\end{tabular}
	\vspace{2mm}
	\caption{The polynomials $a(n;z)$ for $n = 1,2,\ldots,32$.}
	\label{seqlist}
\end{table}

\section*{Acknowledgements}

The author would like to thank Dante Manna for a suggestion which streamlined the proof of Proposition \ref{unitcircprop} and Karl Dilcher for proof reading and valuable criticism.

\bibliographystyle{amsplain}
\bibliography{biblio}

\providecommand{\bysame}{\leavevmode\hbox to3em{\hrulefill}\thinspace}
\providecommand{\MR}{\relax\ifhmode\unskip\space\fi MR }
\providecommand{\MRhref}[2]{%
  \href{http://www.ams.org/mathscinet-getitem?mr=#1}{#2}
}
\providecommand{\href}[2]{#2}
\begin{thebibliography}{10}

\bibitem{adam:contfracs}
B.~Adamczewski, \emph{Non-converging continued fractions related to the {S}tern
  diatomic sequence}, Acta Arith. \textbf{142} (2010), no.~1, 67--78.

\bibitem{alkauskas:likealtsternpolys}
G.~Alkauskas, \emph{The {M}inkowski question mark function: explicit series for
  the dyadic period function and moments}, Math. Comp. \textbf{79} (2010),
  no.~269, 383--418.

\bibitem{carlitz:bell}
L.~Carlitz, \emph{Single variable {B}ell polynomials}, Collect. Math.
  \textbf{14} (1962), 13--25.

\bibitem{coons:transcendence}
M.~Coons, \emph{The transcendence of series related to {S}tern's diatomic
  sequence}, Int. J. Number Theory \textbf{6} (2010), no.~1, 211--217.

\bibitem{dilcher:sternpolys01}
K.~Dilcher and K.~B. Stolarsky, \emph{A polynomial analogue to the {S}tern
  sequence}, Int. J. Number Theory \textbf{3} (2007), no.~1, 85--103.

\bibitem{dilcher:sternpolys02}
\bysame, \emph{Stern polynomials and double-limit continued fractions}, Acta
  Arith. \textbf{140} (2009), no.~2, 119--134.

\bibitem{erdos:zeroargdistr}
P.~Erd\H{o}s and P.~Tur{\'a}n, \emph{On the distribution of roots of
  polynomials}, Ann. of Math. (2) \textbf{51} (1950), 105--119.

\bibitem{hughes:zerosrandpolys}
C.~P. Hughes and A.~Nikeghbali, \emph{The zeros of random polynomials cluster
  uniformly near the unit circle}, Compos. Math. \textbf{144} (2008), no.~3,
  734--746.

\bibitem{jentzsch}
R.~Jentzsch, \emph{Untersuchungen zur {T}heorie der {F}olgen analytischer
  {F}unktionen}, Acta Math. \textbf{41} (1916), no.~1, 219--251.

\bibitem{petr:altsternpolys}
S.~Klav{\v{z}}ar, U.~Milutinovi{\'c}, and C.~Petr, \emph{Stern polynomials},
  Adv. in Appl. Math. \textbf{39} (2007), no.~1, 86--95.

\bibitem{lehmer:stern}
D.~H. Lehmer, \emph{On {S}tern's {D}iatomic {S}eries}, Amer. Math. Monthly
  \textbf{36} (1929), no.~2, 59--67.

\bibitem{marden:geometry}
M.~Marden, \emph{Geometry of {P}olynomials}, Second edition. Mathematical
  Surveys, No. 3, American Mathematical Society, Providence, R.I., 1966.

\bibitem{vsemirnov:sternpolys}
M.~Vsemirnov, \emph{Quadratic identities for a class of {F}ibonacci-like
  polynomials}, Fibonacci Quart. \textbf{49} (2011), no.~3, 220--226.

\end{thebibliography}

\end{document}